\theoremstyle{plain}
\newtheorem{theorem}{Theorem}[section]
\newtheorem{lemma}[theorem]{Lemma}
\newtheorem{corollary}[theorem]{Corollary}
\newtheorem{remark}[theorem]{Remark}
\theoremstyle{definition}
\newtheorem{definition}[]{Definition}
\newcommand{\bydef}{\,\stackrel{\mbox{\tiny\textnormal{\raisebox{0ex}[0ex][0ex]{def}}}}{=}\,}
\newcommand{\R}{\mathbb{R}}
\newcommand{\C}{\mathbb{C}}
\newcommand{\tx}{\tilde{x}}
\newcommand{\tX}{\tilde{X}}
\newcommand{\bX}{\bar{X}}
\newcommand{\cM}{\mathcal{M}}
\newcommand{\cE}{\mathcal{E}}
\newcommand{\cS}{\mathcal{S}}
\newcommand{\cI}{\mathcal{I}}
\newcommand{\cont}{\mathcal{C}}
\newcommand{\Rn}{\R^n}
\newcommand{\nxn}{{n\times n}}
\newcommand{\Rnxn}{\R^\nxn}
\newcommand{\bmat}[1]{\begin{bmatrix}#1\end{bmatrix}}
\newcommand{\T}{\mathcal{T}}
\newcommand{\cN}{\mathcal{N}}
\newcommand{\Tx}[1]{\mathcal{T}_{\x{#1}}}
\newcommand{\lam}{\lambda}
\newcommand{\bx}{\mathbf{x}}
\newcommand{\bp}{\mathbf{p}}
\newcommand{\x}[1]{\mathbf{x}^{(#1)}}
\newcommand{\diag}{\operatorname{diag}}
\newcommand{\Arg}{\operatorname{Arg}}
\begin{document}
\title{
Cusp bifurcations: numerical detection via two-parameter continuation and computer-assisted proofs of existence
}

\author{Jean-Philippe Lessard\thanks {McGill University, Department of Mathematics and
Statistics, 805 Sherbrooke Street West, Montreal, QC, H3A 0B9, Canada ({\tt jp.lessard@mcgill.ca}). This author was supported by NSERC.}
\and Alessandro Pugliese \thanks{
Dipartimento di Matematica, Universit\`a degli Studi di Bari Aldo Moro,
Via E. Orabona 4, 70125 Bari, Italy ({\tt alessandro.pugliese@uniba.it}). This author gratefully acknowledges the hospitality of the School of Mathematics of Georgia Tech and the support provided by Universit\`a degli Studi di Bari Aldo Moro and by INdAM--GNCS. This author has been partially funded by  PRIN2022PNRR n. P2022M7JZW ``SAFER MESH - Sustainable mAnagement oF watEr Resources ModEls and numerical MetHods'' research grant, funded by the Italian Ministry of Universities and Research (MUR) and  by the European Union through Next Generation EU, M4.C2.1.1, CUP H53D23008930001.
}
}

\maketitle


\begin{abstract}
This paper introduces a novel computer-assisted method for detecting and constructively proving the existence of cusp bifurcations in differential equations. The approach begins with a two-parameter continuation along which a tool based on the theory of Poincaré index is employed to identify the presence of a cusp bifurcation. Using the approximate cusp location, Newton's method is then applied to a given augmented system (the cusp map), yielding a more precise numerical approximation of the cusp. Through a successful application of a Newton-Kantorovich type theorem, we establish the existence of a non-degenerate zero of the cusp map in the vicinity of the numerical approximation. Employing a Gershgorin circles argument, we then prove that exactly one eigenvalue of the Jacobian matrix at the cusp candidate has zero real part, thus rigorously confirming the presence of a cusp bifurcation. Finally, by incorporating explicit control over the cusp's location, a rigorous enclosure for the normal form coefficient is obtained, providing the explicit dynamics on the center manifold at the cusp. We show the effectiveness of this method by applying it to four distinct models.
\end{abstract}

\begin{center}
{\bf \small Key words.} 
{ \small Cusp bifurcation, two-parameter continuation, Poincaré index theory, Newton-Kantorovich theorem, computer-assisted proofs}
\end{center}

\begin{center}
{\bf \small Mathematics Subject Classification (2020)}  \\ \vspace{.05cm}
{\small 37G99 $\cdot$ 65P30 $\cdot$ 65G40 $\cdot$ 34C23 $\cdot$ 37M20 } 
\end{center}

\section{Introduction}\label{sec:intro}

A {\em cusp bifurcation} is a codimension-two bifurcation that provides an organizing center at which two saddle-node bifurcations come together. This fundamental phenomenon in bifurcation theory often gives rise to a hysteresis loop, which leads to the important notion of bistability. Despite its undeniable importance in dynamical systems theory, studying rigorously the occurrence  of a cusp bifurcation is hindered by two main factors. First, as a codimension-two bifurcation, detecting a cusp requires varying two parameters. Second, actually proving the existence of a cusp in a system of nonlinear differential equations is nontrivial, as it requires having a precise location of the cusp point and then evaluating carefully high order derivatives at the point. In this paper, we propose an approach which addresses these issues. To numerically detect a cusp, we perform a two-parameter continuation of equilibria (e.g. see \cite{MR1298047,MR1645376,MR3464215}) along which we apply the theory of Poincaré index to search for zeros of a specific two dimensional vector field (see Section~\ref{sec:detection}). Once a cusp has been numerically detected during the continuation, we apply Newton's method to a carefully chosen map (the {\em cusp map} defined in \eqref{eq:cusp_map}) to refine the approximation which is then validated using a computer-assisted proof based on Newton-Kantorovich theorem (see Section~\ref{sec:CAPs}). Finally, using the explicit control of the location of the cusp, we obtain rigorously the normal form on the center manifold. To formalize the ideas, let us introduce some notation and background. 

Consider open sets $U \subset \R^n$ and $\Lambda \subset \R^2$ and a parameter dependent vector field $f:U \times \Lambda \to \R^n$ which is $\cont^3$ in its first argument (phase space) and $\cont^1$ in its second argument (parameter space). Let
\begin{equation} \label{eq:general_vector_field}
\dot x = f(x,\lambda), \qquad x \in U \subset \R^n, \quad \lambda \in \Lambda \subset \R^2
\end{equation}
be the corresponding parameter dependent differential equation. Denote its set of equilibria by 
\begin{equation} \label{eq:solution_manifold}
\cE \bydef \{ (x,\lambda) \in U \times \Lambda : f(x,\lambda)=0 \}.
\end{equation}
Following the presentation from \cite{MR1694631,MR2071006}, we introduce the conditions for a cusp bifurcation to occur in \eqref{eq:general_vector_field}. Since a cusp bifurcation is a point in $\cE$ at which two saddle-node bifurcations merge, let us begin by recalling the ingredients necessary for a saddle-node bifurcation to occur. Assume first that $(x,\lambda) \in \cE$, that is $f(x,\lambda)=0$. At a saddle-node bifurcation, the Jacobian matrix $D_xf(x,\lambda)$ has a simple zero eigenvalue and no other eigenvalues on the imaginary axis. Denote by $v,w \in \R^n$ two non-zero vectors such that $D_xf(x,\lambda)v=0$ and $D_xf(x,\lambda)^T w=0$, normalized so that $w^T v = 1$. Here, $M^T$ denotes the transpose of a given matrix $M$. Denoting the coefficient
\begin{equation} \label{eq:coefficient_b}
b \bydef \frac{1}{2} w^T D_{xx} f(x,\lambda)(v,v) \in \R,
\end{equation}
we say that a {\em saddle-node bifurcation} occurs at $(x,\lambda)$ if $b \ne 0$. In this case, it can be shown (e.g. see \cite{MR2071006} for more details) that the restriction of the differential equation \eqref{eq:general_vector_field} at $\lambda$ to the one-dimensional center manifold has a normal form $\dot y = b y^2 + O(y^3)$, where $y\in \R$. 
At a cusp bifurcation, the coefficient $b$ in \eqref{eq:coefficient_b} vanishes (after all, it is a point at which two saddle-node bifurcation points collide), and we must resort to a higher order normal form. To make this precise, let $(h,s) \in \R^{n+1}$ be the unique solution of
\begin{equation} \label{eq:bordering_system}
\begin{pmatrix}
D_{x} f (x,\lambda) & v \\ w^T & 0
\end{pmatrix}
\begin{pmatrix}
h \\ s
\end{pmatrix}
= 
\begin{pmatrix}
- D_{xx} f (x,\lambda)(v,v) \\ 0
\end{pmatrix},
\end{equation}
where the unicity of the solution comes from the invertibility of the matrix on the left-hand side (e.g. see Lemma 5.3 in \cite{MR2071006}). From this, we get that $D_{x} f (x,\lambda) h + s v = - D_{xx} f (x,\lambda)(v,v)$, and hence
\[
w^T D_{x} f (x,\lambda) h + s w^T v = - w^T D_{xx} f (x,\lambda)(v,v)
\Leftrightarrow 0+s = -2b = 0.
\]
Then $s=0$, that is $h$ solves $D_{x} f (x,\lambda) h = - D_{xx} f (x,\lambda)(v,v)$. Letting
\begin{equation} \label{eq:nondegeneracy_coeff_intro}
c \bydef \frac{1}{6} w^T \left( D_{xxx} f (x,\lambda)(v,v,v) + 3 D_{xx} f (x,\lambda)(v,h) \right),
\end{equation}
we say that a {\em cusp bifurcation} occurs at the point $(x,\lambda)$ if $c \ne 0$ (e.g. see \cite{MR1694631,MR2071006}). Moreover, the restriction of \eqref{eq:general_vector_field} to the one-dimensional center manifold has a normal form
\begin{equation} \label{eq:cusp_normal_form_intro}
\dot y = c y^3 + O(y^4).
\end{equation}

We summarize the above discussion in the following lemma. 

\begin{lemma} \label{lem:cusp}
Let $(x,\lambda) \in \cE$ be such that $D_xf(x,\lambda)$ has one zero eigenvalue and all its remaining eigenvalues have nonzero real part. Let $v,w \in \R^n$ such that $w^T v = 1$, $D_xf(x,\lambda)v=0$ and $D_xf(x,\lambda)^T w=0$. Recall \eqref{eq:coefficient_b} and \eqref{eq:nondegeneracy_coeff_intro}, and assume that $b=0$ and $c \ne 0$. Then a cusp bifurcation occurs at $(x,\lambda)$. Moreover, the one-dimensional center manifold has normal form given by \eqref{eq:cusp_normal_form_intro}.
\end{lemma}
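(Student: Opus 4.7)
The plan is to invoke the center manifold theorem together with a careful local Taylor expansion, following the outline already laid down in the discussion preceding the lemma. Since $D_x f(x,\lambda)$ has a simple zero eigenvalue with right eigenvector $v$ and all other eigenvalues off the imaginary axis, there exists a one-dimensional $\cont^3$ center manifold $\cM^c$ through $(x,\lambda)$, tangent to $\mathrm{span}\{v\}$. I would parametrize $\cM^c$ locally as
\begin{equation*}
x(y) = x + y\,v + \tfrac{1}{2}y^2\, h_2 + \tfrac{1}{6}y^3\, h_3 + O(y^4),
\end{equation*}
with $h_2, h_3 \in \R^n$ determined by the invariance condition, together with a normalization $w^T h_k = 0$ for $k \ge 2$ to make the parametrization unique.

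Substituting $x(y)$ into $\dot x = f(x(y),\lambda)$ and comparing powers of $y$, the reduced scalar ODE takes the form $\dot y = a_1 y + a_2 y^2 + a_3 y^3 + O(y^4)$. The $y^1$ equation reads $a_1 v = D_x f \cdot v = 0$, giving $a_1 = 0$. The $y^2$ equation reads $D_x f \cdot h_2 + D_{xx} f(v,v) = 2 a_2 v$; projecting onto $w^T$ and using $w^T D_x f = 0$ and $w^T v = 1$ gives $a_2 = \tfrac12 w^T D_{xx} f(v,v) = b$, which vanishes by hypothesis. With $a_2 = 0$, the $y^2$ equation becomes $D_x f \cdot h_2 = -D_{xx} f(v,v)$, which is precisely the bordering system \eqref{eq:bordering_system} (with $s=0$), so $h_2 = h$. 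The $y^3$ equation then reads
\begin{equation*}
D_x f \cdot h_3 + 3 D_{xx} f(v,h) + D_{xxx} f(v,v,v) = 6 a_3 v,
\end{equation*}
and projecting onto $w^T$ yields $a_3 = \tfrac{1}{6} w^T\bigl( D_{xxx} f(v,v,v) + 3 D_{xx} f(v,h) \bigr) = c$.

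Thus the reduced dynamics on $\cM^c$ read $\dot y = c y^3 + O(y^4)$, which is exactly the cusp normal form \eqref{eq:cusp_normal_form_intro}. Since $c \ne 0$, this establishes that $(x,\lambda)$ is a (non-degenerate) cusp bifurcation point in the sense specified in the excerpt, completing the proof.

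The only delicate ingredient is the standard center manifold reduction and the bookkeeping in the Taylor expansion of $f(x(y),\lambda)$; all ingredients (smoothness of $f$, invertibility of the bordered matrix, the definitions of $b$ and $c$, and the resulting formulas) have already been prepared in the text, so the argument is essentially a consolidation. The main conceptual point — and the only potential obstacle — is to keep the center-manifold parametrization consistent with the normalization $w^T h_k = 0$ so that the right-hand side of each order-$k$ equation projects cleanly onto $w^T v = 1$, producing the claimed formula for $c$ without spurious contributions from the yet-undetermined higher-order term $h_3$.
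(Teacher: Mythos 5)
Your argument is correct, and it is worth noting that the paper itself gives no proof of Lemma~\ref{lem:cusp}: the text says ``We summarize the above discussion in the following lemma,'' and the preceding discussion simply sets up the bordering system and the coefficients $b$, $c$ while delegating the normal-form derivation to \cite{MR1694631,MR2071006}. Your write-up carries out precisely that deferred center-manifold reduction, so it is more explicit than what appears in the paper rather than a genuinely different route.

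Two small points of bookkeeping are worth making explicit. First, when you ``substitute $x(y)$ into $\dot x = f(x(y),\lambda)$'' the left-hand side is of course $x'(y)\,\dot y = \bigl(v + y h_2 + \tfrac12 y^2 h_3 + O(y^3)\bigr)\dot y$, and this is what produces the factors $2a_2 v$ and $6a_3 v$ (together with the lower-order contributions $a_1 h_2$, $a_2 h_2$, $\tfrac12 a_1 h_3$, which vanish here because $a_1 = a_2 = 0$); you clearly used this, since your coefficients are right, but the proof reads more cleanly if the chain rule is written out. Second, the normalization $w^T h_k = 0$ is exactly what makes $h_2$ the unique solution $h$ of the bordering system \eqref{eq:bordering_system} and, as you note, guarantees that the undetermined term $D_x f\cdot h_3$ projects to zero under $w^T$, so no spurious contribution enters $a_3$. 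With these two clarifications your derivation cleanly reproduces $a_1 = 0$, $a_2 = b = 0$, $a_3 = c$, hence $\dot y = c y^3 + O(y^4)$, which together with the hypothesis $c\neq 0$ gives the conclusion.
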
 

This paper presents a computational methodology for rigorously verifying the hypotheses outlined in Lemma~\ref{lem:cusp}. Our approach consists of three primary steps. Initially, we identify cusps using a blend of multi-parameter continuation and Poincaré index theory. Subsequently, we conduct computer-assisted validations to confirm the presence of cusp bifurcations by verifying the conditions specified in Lemma~\ref{lem:cusp}. Finally, by exercising precise control over the cusp's positioning, we achieve a rigorous enclosure for the coefficient $c$ within the normal form \eqref{eq:cusp_normal_form_intro}. We elaborate on each of these contributions.

The process begins with the detection step (see Section~\ref{sec:detection}), which is done by computing the Poincaré index of curves (i.e. the boundaries of two-dimensional simplices used in building a triangulation of a manifold $\cM \subset \cE$) with respect to a carefully chosen two-dimensional vector field. The computation of the Poincaré index requires in turn to perform the continuation of the singular value decomposition of $D_xf(x, \lambda)$ along the boundaries of simplices. If the Poincaré index is nontrivial, then a cusp bifurcation point has been detected ``inside'' the simplex. This approach is different from the more standard cusp detection, which consists of the following two steps: (a) do a one-dimensional continuation (e.g. pseudo-arclength continuation, see \cite{MR910499}) to detect a saddle-node bifurcation (fold); and (b) free an extra parameter, perform a continuation of folds, monitor the coefficient $b$ defined in \eqref{eq:coefficient_b}, and claim that a cusp bifurcation has been detected when $b$ changes sign along the branch of folds. This well-known method can be applied using standard one-dimensional continuation softwares such as AUTO \cite{MR635945} and MATCONT \cite{MR2000880}. However, detecting a cusp bifurcation via one-dimensional continuations can be challenging because the chosen direction in parameter space may not go through a saddle-node bifurcation. As a result, even when starting the continuation near a cusp, this approach could fail at detecting it. In contrast, our approach relies on a two-dimensional continuation \cite{MR1298047,MR1645376,MR3464215}, and has the advantage of locating cusp bifurcation points on two-dimensional manifolds $\cM \subset \cE$. An obvious drawback is the computational overhead with respect to the approach based on one-dimensional continuations. However, a clear upside is that we construct explicitly a portion of the two-dimensional manifold $\cM \subset \cE$, as opposed to just one dimensional branches. In short, we get more at a larger cost.

Once a cusp is numerically detected, the numerical candidate for the cusp is refined by applying Newton's method to the {\em cusp map} $F:\R^{4n+4} \to \R^{4n+4}$ defined in \eqref{eq:cusp_map}. Then, a computer-assisted proof (see Section~\ref{sec:CAPs}) is performed by applying a Newton-Kantorovich Theorem (see Theorem~\ref{thm:NK}) to prove existence of a zero of $F$. By construction, a zero $(x,v,w,\lambda,h,s) \in \R^{4n+4}$ of $F$ satisfies $f(x,\lambda) = D_xf(x,\lambda) v =D_xf(x,\lambda)^T w = 0$ and $b=0$. Moreover, the non-degeneracy of the zero of $F$ (a consequence of the Newton-Kantorovich Theorem) implies directly that $c \ne 0$, with $c$ given in \eqref{eq:nondegeneracy_coeff_intro}. Finally, the explicit control on the location of $(x,\lambda)$ is used to show that $0$ is the only eigenvalue of $D_xf(x,\lambda)$ with zero real part. From Lemma~\ref{lem:cusp}, a cusp bifurcation occurs at $(x,\lambda)$ and the normal form on the center manifold is given by \eqref{eq:cusp_normal_form_intro}. 

Before proceeding any further, it is important to note that employing computer-assisted proofs to investigate bifurcations in both finite and infinite dimensional dynamical systems is not a new concept. This approach has been explored extensively in various domains. For instance, within the realm of ordinary differential equations (ODEs), saddle-node bifurcations have been extensively studied within vector fields \cite{Kanzawa_Oishi,MR3542954}, and in regulatory network dynamics \cite{shane_elena2023}. Additionally, different types of bifurcation points such as symmetry-breaking \cite{Kanzawa_Oishi,MR3542954}, double turning points \cite{MR2059468,MR2319947}, period-doubling \cite{MR2534406}, and cocoon bifurcations \cite{MR2351028} have been investigated. The exploration extends to partial differential equations (PDEs) where saddle-node and symmetry-breaking bifurcations of steady states have been explored \cite{MR2679365,MR3808252,MR3792794,MR3390404}. Furthermore, Hopf bifurcations have been rigorously analyzed with computer-assisted proofs not only in ODEs \cite{MR4238006} but also in neural network dynamics \cite{kuehn_queirolo}, delay differential equations (DDEs) \cite{MR4337868}, and PDEs \cite{MR4372114}. Recent research has delved into specific aspects such as Hopf bubbles and degenerate Hopf bifurcations \cite{church_queirolo}, and solving conjectures like Wright's conjecture \cite{MR3779642} through normal form computations of Hopf bifurcations in delay equations. Finally, there is a growing interest in studying center manifolds using rigorous numerical methods as evidenced by recent literature \cite{jb_center_manifold1,jb_center_manifold2,MR4217108,MR3397322,MR3567489,MR4664056}.


Our paper is structured as follows. In Section~\ref{sec:detection}, we outline the algorithm employed for constructing a numerical approximation of a segment of the equilibria manifold $\cE$ for equation \eqref{eq:general_vector_field}, followed by our methodology for numerically identifying cusp points on this manifold. Section~\ref{sec:CAPs} is dedicated to presenting computer-assisted proofs utilizing a Newton-Kantorovich argument to establish the existence of cusp bifurcations. In Section~\ref{sec:applications}, we conclude our study by applying our methodology to four distinct ODE models: Bazykin’s model, a predator-prey system featuring a non-monotonic response function, a model for metastatic cell transitions, and Bykov’s model.

\section{Two-parameter continuation and numerical detection of cusps} \label{sec:detection}

In this section we first describe the algorithm that we use to construct a numerical approximation of (a portion of) the set $\cE$ of equilibria for \eqref{eq:general_vector_field}, and then present our method for the numerical detection of cusp points. 

\subsection{Two-parameter continuation}\label{subsec:2Dcont}
The continuation algorithm, in its current form, has been introduced in \cite{MR3464215}, where the authors have used it in the context of rigorous continuation of families of equilibria of multi-parameter infinite dimensional problems. It has been used also in \cite{CoDeLoPu} for computing separatrices between regions that lead to crossing and sliding behaviour in non-smooth dynamical systems, and in \cite{church_queirolo} to compute two-dimensional surfaces of periodic orbits in ODEs giving rise to Hopf bubbles via degenerate Hopf bifurcations. Here we will give a short account on how the algorithm works, and refer the reader to \cite{MR3464215} for a detailed description. 

Following the notations introduced in the previous section, we consider $f:U\times\Lambda\to\Rn$ sufficiently smooth, with $U\subset\Rn$ and $\Lambda\subset\R^2$ open, and the set of equilibria $\cE$ defined in \eqref{eq:solution_manifold}. For convenience, in this section we will often group $x\in\Rn$ and $\lam\in\R^2$ together into a ${(n+2)}$-dimensional vector: $\mathbf{x}=(x,\lam)$. Under the generic assumption of $0$ being a regular value for $f$, $\cE$ is, at least locally, a two-dimensional manifold embedded in $\R^{n+2}$, and is as smooth as $f$. The aim of the algorithm is to construct a numerical triangulation of a portion of $\cE$ under interest, that we denote by $\cM\subset\cE$. The set $\cM$ may have one connected component or be the disjoint union of several connected components. We assume for simplicity that $\cM$ has only one component, as, otherwise, everything that follows applies without distinction to each component. Upon completion, the algorithm's output is a finite collection of nodes on $\cM$ and simplices (in fact triangles) each having three of those nodes as vertices, together with information about the adjacency relations between simplices. 

The algorithm follows the general philosophy of predictor-corrector methods. It starts off from an initial point $\x{0}$ on $\cM$, it computes a first patch\footnote{By ``patch centered at $\bx$'' we mean the collection of simplices that share the node $\bx\in\cM$. } on $\cM$ centered at $\x{0}$, and then proceeds through an advancing front strategy, progressively adding new nodes $\x{k}$ and simplices to the triangulation until $\cM$ has been completely approximated. See Figure \ref{fig:figure1} for a visual description.

\begin{figure}[h]
\centering
\includegraphics[width=.85\textwidth]{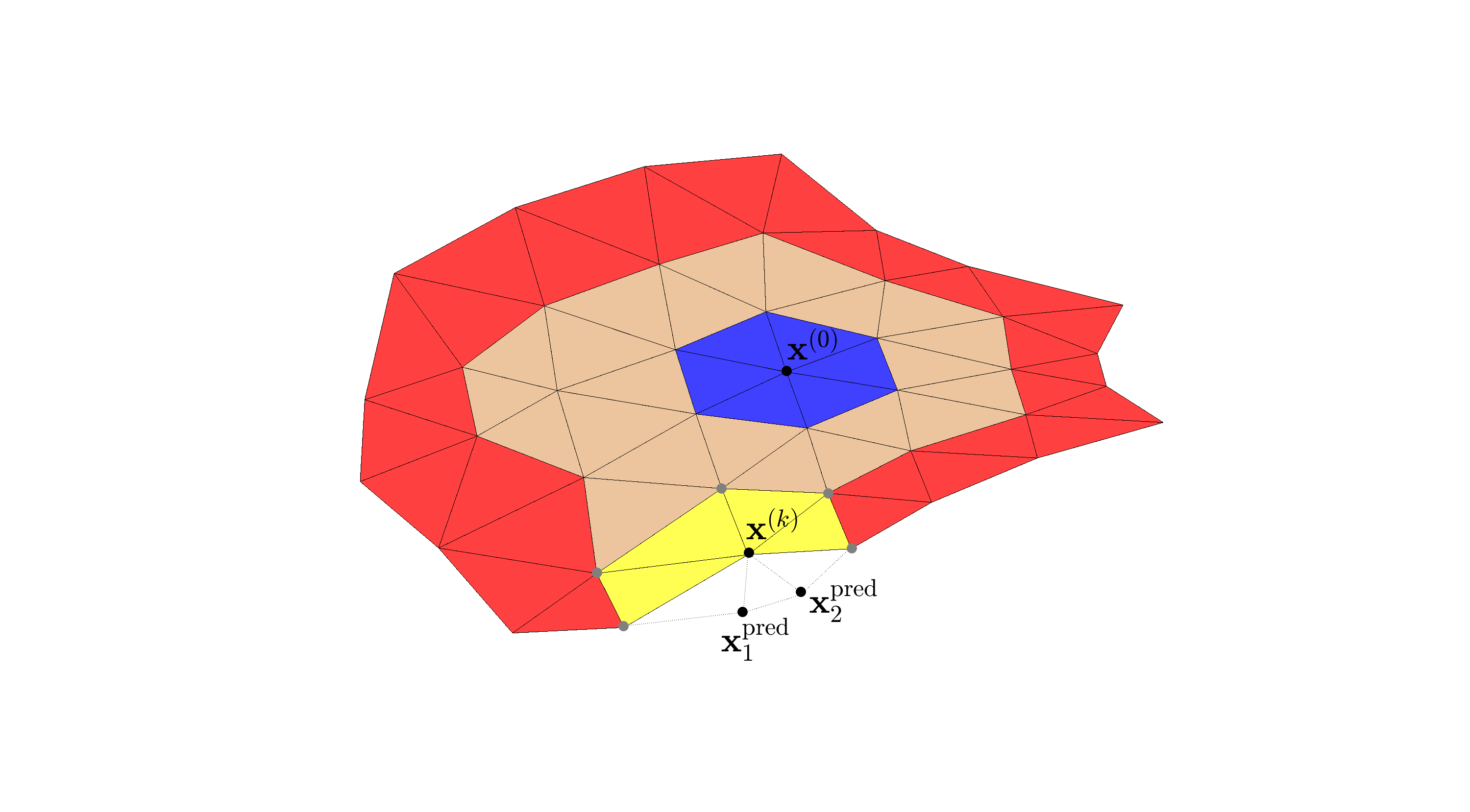}\label{fig:figure1}
\caption{Visual sketch of the continuation procedure: in blue the first hexagonal patch, in yellow an incomplete patch centered at the ``current point'' $\x{k}$, in red/yellow the simplices which are on the boundary of the (un)explored portion of $\cM$.}
\end{figure}

We now describe first the general step, and then the initialization.

\medskip
\noindent{\bf General step.} Let $\x{k}$ be the ``current point'' on $\cM$, center of an incomplete\footnote{A patch is said to be complete if, upon projection on the tangent plane to $\cM$ at $\bx$, the sum of the angles of its simplices at $\bx$ is $2\pi$.} patch (see Figure \ref{fig:figure1}). Prediction steps are taken on the tangent plane $\Tx{k}\cM$ to $\cM$ at $\x{k}$ as follows: the external nodes of the incomplete patch centered at $\x{k}$ (the grey points in Figure \ref{fig:figure1}) are projected on $\Tx{k}\cM$, and then predictions $\mathbf{x}^\mathrm{pred}_j$ are formed on $\Tx{k}\cM$ so that the projected polygonal patch on $\Tx{k}\cM$ centered at $\x{k}$ is complete. The correction step consists of projecting the predictions $\mathbf{x}^\mathrm{pred}_j$ back to $\cM$. All projections to and from $\cM$ are numerically performed by applying the Gauss-Newton method along the direction perpendicular to $\Tx{k}\cM$. Stepsize (i.e. the distance between the center $\x{k}$ and the predictions) is adaptively chosen based on a user provided tolerance, and will be inversely proportional to the local curvature of the manifold. 

\medskip
\noindent{\bf Initialization.} As in any continuation method, the algorithm requires appropriate initialization. This consists of an initial point $\x{0}\in\cM$ provided by the user and tolerances required for stepsize selection, convergence of Gauss-Newton iterates, and boundary of the portion of manifold to be explored. The algorithm starts off by projecting on $\cM$ the six vertices of a regular hexagon lying on $\T_{\x{0}}\cM$ and centered at $\x{0}$, producing the first patch, and then proceeds through the general step. Again, see Figure \ref{fig:figure1}.

\subsection{Detection of cusp bifurcations}\label{subsec:detect}

In this section we illustrate our strategy for the detection of a cusp bifurcation on $\cM$. Our main result will be Theorem \ref{thm:mainDetect}. To prove this theorem, we first need to review some fundamental results which constitute its two main ingredients: the theory of Poincaré index and the smooth singular value decomposition of matrix valued functions. We do so in Sections \ref{subsubsec:poincare} and \ref{subsubsec:smoothSvd} below.

\subsubsection{Poincaré index}\label{subsubsec:poincare} The Poincaré index is a topological tool that can reveal the presence of isolated zeros of a continuous map inside the region bounded by a Jordan curve. Its definition goes back to the original work of Poincaré \cite{H1881}. Our exposition is adapted from \cite{MR0069338}.
\begin{definition}[Poincaré index]\label{def:poincare} Let $g:(s_1,s_2)\in D\mapsto g(s_1,s_2)\in\R^2$ be continuous, with $D$ a bounded open subset of $\R^2$, and suppose $g$ has only isolated zeros in $D$. Let $\Gamma\subset D$ be a Jordan curve passing through no zeros of $g$. Let $\Delta\theta$ be the total change\footnote{Let $\Gamma$ be a Jordan curve that does not contain the origin, and let it be parametrized by a periodic function $\gamma(t)$ with minimum period 1. Extend the notion of the argument of a complex number $z$, denoted by $\Arg(z)$, to a point on the plane in the natural way, and note that $\Arg$ is a multivalued map. Let $\theta$ be any continuous selection of the map  $t \in [0,1] \mapsto \Arg(\gamma(t))$. Then, the ``total change in the angle $\theta$ that $\gamma(t)$ makes with respect to any fixed direction in $\mathbb{R}^2$ as it traces $\Gamma$ once with positive orientation'' is defined as $\theta(1) - \theta(0)$. A continuous selection exists because $\Gamma$ is compact and does not contain the origin. In fact, there are infinitely many, but the value of $\theta(1) - \theta(0)$ does not depend on the choice of the selection.} in the angle $\theta$ that $g(s_1,s_2)$ makes with respect to some fixed direction of $\R^2$ as $(s_1,s_2)$ traces $\Gamma$ once with positive (i.e. counterclockwise) orientation. Then, the Poincaré index of $\Gamma$ with respect to $g$ is defined as $\Delta\theta/2\pi$, and will be denoted by $\cI_g(\Gamma)$. 
\end{definition}

Clearly, the number $\cI_g(\Gamma)$ is an integer. Its importance lies in the following two fundamental results, which also require $D$ to be simply connected.
\begin{theorem}\label{thm:poincare} Let $D$ be open, bounded, and simply connected. If $\Gamma\subset D$ is a Jordan curve containing no zeros of $g$ on it or in its interior, then $\cI_g(\Gamma)=0$.
\end{theorem}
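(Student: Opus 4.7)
The plan is to show that the total angle change $\Delta\theta$ vanishes by extending $g/|g|$ continuously from $\Gamma$ to the entire closed region bounded by $\Gamma$, and then lifting through the covering $\R\to S^1$ to eliminate the multi-valuedness of $\Arg$.

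First I would establish the needed positivity. By the Jordan curve theorem, $\R^2\setminus\Gamma$ has exactly one bounded component $R$. The simple connectedness of $D$ forces $\bar R\subset D$: indeed, $S^2\setminus D$ is connected and contains the point at infinity, hence lies in the unbounded component of $\R^2\setminus\Gamma$, so $R$ cannot meet $\R^2\setminus D$. The hypothesis that $g$ has no zeros on $\Gamma$ or inside it then gives $g\ne 0$ on the compact set $\bar R$, and by compactness $|g|\ge c>0$ on $\bar R$, so $u\bydef g/|g|:\bar R\to S^1$ is continuous.

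Next I would identify $\bar R$ with the closed unit disk $\overline{D^2}$ via a homeomorphism $\psi$ mapping $\partial D^2$ onto $\Gamma$, supplied by Schoenflies' theorem. This upgrades $u|_\Gamma$ to a continuous map $u\circ\psi:\overline{D^2}\to S^1$ defined on a simply connected domain. The heart of the argument is then the covering-space lifting property: any continuous map from a simply connected space into $S^1$ lifts to a continuous $\tilde\theta:\overline{D^2}\to\R$ with $(\cos\tilde\theta,\sin\tilde\theta)=u\circ\psi$. Restricting $\tilde\theta$ to the boundary circle and composing with any periodic parametrization $\eta:[0,1]\to\partial D^2$ of $\partial D^2$ (with $\eta(0)=\eta(1)$) yields a continuous selection of $\Arg(g(\gamma(t)))$ in the sense of Definition \ref{def:poincare}, where $\gamma\bydef\psi\circ\eta$ traces $\Gamma$. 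Since $\eta(0)=\eta(1)$, the endpoint values of this selection agree, so $\Delta\theta=\tilde\theta(\eta(1))-\tilde\theta(\eta(0))=0$ and therefore $\cI_g(\Gamma)=0$.

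The main obstacle I anticipate is packaging the topology cleanly rather than doing any real calculation: the genuinely topological step is the covering-space lift, which is precisely where the simple connectedness hypothesis is consumed. Invoking Schoenflies is convenient but heavier than strictly necessary; an alternative would be to construct a null-homotopy of the loop $u\circ\gamma$ in $S^1$ by explicitly contracting $\gamma$ to a point inside $\bar R$ (using path-connectedness of $R$) and then appealing to homotopy invariance of the winding number. Either route terminates in the same identification of $\Delta\theta$ with the difference of endpoint values of a continuous real-valued lift on a loop, which is automatically zero.
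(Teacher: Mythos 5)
Your proof is correct, and it supplies something the paper does not: the paper states Theorem~\ref{thm:poincare} as a known fact (adapting the exposition from its cited reference) and gives no proof of it, so there is no in-paper argument to compare against.

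On the merits, the argument is sound and the logical skeleton is exactly right. You correctly isolate the one place where the hypothesis that $D$ is simply connected is used, namely to ensure $\bar R\subset D$ so that $g$ is nonvanishing on all of $\bar R$ and $u=g/|g|$ is a well-defined continuous circle-valued map on the full closed disk-like region. Passing through Schoenflies to identify $\bar R$ with $\overline{D^2}$, applying the lifting criterion for the covering $\R\to S^1$ over the simply connected (and locally path-connected) domain $\overline{D^2}$, and then reading off $\Delta\theta=\tilde\theta(\eta(1))-\tilde\theta(\eta(0))=0$ from $\eta(0)=\eta(1)$ is a clean and standard route. Two small remarks. First, the statement ``$S^2\setminus D$ is connected'' when $D\subset\R^2$ is open, connected, and simply connected is a genuine theorem (essentially a form of Alexander duality or a consequence of the Riemann mapping theorem), not a definitional unwinding, so it deserves a citation if one were writing this out in full; you also implicitly use that simply connected means connected. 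Second, you are right that Schoenflies can be avoided: the bounded Jordan domain $R$ is simply connected by the Jordan curve theorem alone, $\Gamma$ can be shrunk slightly inside $R$ without changing the winding number (since $g$ is nonvanishing near $\Gamma$), and then homotopy invariance of the winding number of $g$ over the contractible set $R$ finishes it. Your version via Schoenflies and the universal cover is equally valid and arguably more transparent about where the contradiction with a nonzero index would arise.
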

\begin{corollary}\label{cor:poincare} Let $g:(s_1,s_2)\in D\mapsto g(s_1,s_2)\in\R^2$ be continuous, where $D\subset\R^2$ is open, bounded, and simply connected. Let $\Gamma\subset D$ be a Jordan curve passing through no zeros of $g$. If $\cI_g(\Gamma)\ne0$, then $g$ has a zero inside the region bounded by $\Gamma$.
\end{corollary}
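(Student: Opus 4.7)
The plan is to derive the corollary as a direct contrapositive of Theorem~\ref{thm:poincare}, which is the substantive content already assumed; no new machinery is needed. Since $g$ has only isolated zeros in $D$ (this is built into Definition~\ref{def:poincare}) and $\Gamma$ is a Jordan curve in the open simply connected set $D$ with no zeros of $g$ lying on $\Gamma$, the index $\cI_g(\Gamma)$ is well defined and integer-valued.

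I would argue by contradiction. Suppose $\cI_g(\Gamma)\ne 0$ but $g$ has no zero in the open region $\Omega$ bounded by $\Gamma$. Combined with the standing hypothesis that $g$ has no zero on $\Gamma$ itself, this means $g$ has no zero on $\overline{\Omega}=\Gamma\cup\Omega$. Because $D$ is open and simply connected and contains $\overline{\Omega}$ (here one uses that $\Gamma\subset D$ and that the Jordan interior of $\Gamma$ is contained in $D$ by simple connectedness of $D$), the hypotheses of Theorem~\ref{thm:poincare} are satisfied. Applying that theorem yields $\cI_g(\Gamma)=0$, which contradicts the assumption $\cI_g(\Gamma)\ne 0$. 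Hence $g$ must have at least one zero in $\Omega$.

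The only step that requires a moment of care, rather than being purely formal, is the justification that the interior of $\Gamma$ lies inside $D$; this uses the Jordan curve theorem together with simple connectedness of $D$ (otherwise the ``interior'' bounded by $\Gamma$ could fail to be contained in the domain on which $g$ is defined). This is the only place where the topological hypothesis on $D$ enters the corollary beyond what Theorem~\ref{thm:poincare} already uses. Everything else is logical inversion, so there is no serious obstacle and no routine calculation to carry out.
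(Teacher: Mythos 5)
Your proof is correct and takes exactly the route the paper intends: the corollary is the contrapositive of Theorem~\ref{thm:poincare}, combined with the standing hypothesis that $g$ has no zeros on $\Gamma$ itself, and the paper gives no separate argument because none is needed. Your additional remark that simple connectedness of $D$ guarantees the Jordan interior of $\Gamma$ lies inside $D$ (so that $g$ is actually defined there and Theorem~\ref{thm:poincare} applies) is a sound and worthwhile point of care, though the paper leaves it implicit.
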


\subsubsection{Singular value decomposition of matrix valued functions}\label{subsubsec:smoothSvd} The singular value decomposition (SVD) is largely regarded as the most reliable tool to gain information on the rank of a matrix and on its fundamental subspaces. The idea to use it for detecting bifurcations in dynamical systems is not new. It has originally appeared in \cite{chow1988bifurcations}, and has been used in \cite{dieci2006path} to detect folds and branch points. The authors of \cite{dieci2006path} clearly point out that the key to for making this idea effective is \emph{smoothness}. 

Below we give a brief review of facts that concern the singular value decomposition (SVD) of matrix valued functions of one or several parameters which are relevant to our work. All the results are adapted from \cite{Dieci1999OnSD} and \cite{MR2530255}, to which we refer for details.

\bigskip\noindent{\bf One parameter.} We begin with the following fundamental result:
\begin{theorem}[Adapted from \cite{Dieci1999OnSD}, Theorem 3.6]\label{thm:1parSmoothSVD} Let $t\in\R\mapsto A(t)\in\Rnxn$ be $\cont^k$, with $k\ge 0$. If $A(t)$ has distinct singular values for all $t\in\R$, than its singular values and singular vectors can be defined to be $\cont^k$ functions of $t$.  
\end{theorem}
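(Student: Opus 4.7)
The plan is to reduce the smooth SVD to the smooth eigendecomposition of a symmetric matrix-valued function with pointwise simple eigenvalues. I would form $B(t) \bydef A(t)^T A(t)$ and $C(t) \bydef A(t) A(t)^T$, both $\cont^k$, symmetric, and positive semidefinite on $\R$, with common spectrum $\{\sigma_1(t)^2, \ldots, \sigma_n(t)^2\}$. By hypothesis these squared singular values are distinct, hence algebraically simple eigenvalues of both $B(t)$ and $C(t)$ for every $t \in \R$.

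Next, I would invoke the classical fact that a $\cont^k$ symmetric matrix-valued function with pointwise simple eigenvalues admits $\cont^k$ eigenvalue branches together with a $\cont^k$ orthonormal eigenbasis. The cleanest argument is via the implicit function theorem: around a simple eigenpair $(\mu_0, v_0)$ of $B(t_0)$, the augmented system $B(t)v - \mu v = 0$ together with a normalization such as $v_0^T v = 1$ has Jacobian in $(v,\mu)$ that is invertible exactly because $\mu_0$ is simple, yielding local $\cont^k$ selections $\lambda_i(t)$ and $v_i(t)$. These local selections can be continued along all of $\R$ without collision, since distinctness is preserved globally by hypothesis, producing globally $\cont^k$ eigendata $(\lambda_i, v_i)$ of $B(t)$. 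Setting $\sigma_i(t) \bydef \sqrt{\lambda_i(t)}$ then gives $\cont^k$ singular values wherever $\lambda_i(t) > 0$.

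For the corresponding left singular vectors, I would define $u_i(t) \bydef A(t) v_i(t)/\sigma_i(t)$ whenever $\sigma_i(t) > 0$. A direct calculation shows $C(t) u_i(t) = \sigma_i(t)^2 u_i(t)$ and $\|u_i(t)\| = 1$, so $u_i$ is a unit left singular vector and inherits $\cont^k$ regularity from $A$, $v_i$, and $\sigma_i$. Assembling $V(t) \bydef [v_1(t), \ldots, v_n(t)]$, $U(t) \bydef [u_1(t), \ldots, u_n(t)]$, and $\Sigma(t) \bydef \diag(\sigma_1(t), \ldots, \sigma_n(t))$ then produces the desired $\cont^k$ factorization $A(t) = U(t) \Sigma(t) V(t)^T$.

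The main obstacle is the potential presence of a zero singular value. Distinctness forces $\sigma_1(t) > \sigma_2(t) > \cdots > \sigma_n(t) \geq 0$, so at most the smallest value $\sigma_n(t)$ can vanish, and the formula for $u_n$ breaks down there. On the open set where $\sigma_n$ vanishes, I would instead obtain $u_n(t)$ as the $\cont^k$ unit eigenvector of $C(t)$ associated with the simple eigenvalue $\lambda_n(t) = 0$, via the same symmetric eigendecomposition argument applied to $C$. A subtlety is that such eigenvectors are defined only up to sign, so signs must be chosen consistently in order to stitch together the regions where $\sigma_n > 0$ and $\sigma_n = 0$; since $\R$ is simply connected and sign is a discrete obstruction, a globally $\cont^k$ choice always exists.
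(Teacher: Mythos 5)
The reduction to the symmetric eigendecompositions of $B(t)=A(t)^T A(t)$ and $C(t)=A(t)A(t)^T$ is the right idea and is essentially the standard route, but your treatment of the smallest singular value has a genuine gap that the theorem was specifically designed to handle. The hypothesis ``distinct singular values'' does \emph{not} exclude $\sigma_n(t_0)=0$ at some points $t_0$, and at such points the formula $\sigma_n(t)=\sqrt{\lambda_n(t)}$ is typically not $\cont^k$ for $k\ge 1$: if $\lambda_n(t)\sim c\,(t-t_0)^2$ near $t_0$, then $\sqrt{\lambda_n(t)}\sim \sqrt{c}\,|t-t_0|$, which is only $\cont^0$. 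The scalar example $A(t)=t$ already shows the problem: the nonnegative singular value $|t|$ is not $\cont^1$, yet the theorem asserts a $\cont^\infty$ SVD, which is achieved by the \emph{signed} factorization $A(t)=1\cdot t\cdot 1$. The paper emphasizes exactly this point right after the theorem: the singular values produced by Theorem~\ref{thm:1parSmoothSVD} must be allowed to become negative. Your proof never produces that object.

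The downstream consequences are also real, not cosmetic. If you keep $\sigma_n=\sqrt{\lambda_n}\ge 0$ and define $u_n=A v_n/\sigma_n$ away from zeros of $\sigma_n$, then across a zero of $\sigma_n$ the vector $A(t)v_n(t)$ (which \emph{is} $\cont^k$) generically changes sign direction, while $\sigma_n$ does not, so the two one-sided limits of $u_n$ are negatives of each other; your ``stitch on the zero set via the eigenvector of $C$'' step cannot reconcile this with the factorization $A=U\Sigma V^T$ and a nonnegative $\Sigma$. The correct repair is to obtain $\cont^k$ unit eigenvectors $v_n$ of $B$ and $u_n$ of $C$ globally (your implicit-function-theorem argument, or a spectral-projection argument, gives these, with signs fixable since $\R$ is contractible), and then \emph{define} the singular value as $\sigma_n(t)\bydef u_n(t)^T A(t) v_n(t)$, which is $\cont^k$, satisfies $A(t)v_n(t)=\sigma_n(t)u_n(t)$, and is allowed to change sign. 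A smaller issue: your invocation of the implicit function theorem requires $k\ge 1$; for the $k=0$ case allowed by the statement you need instead the continuity of the rank-one spectral projector associated with a simple eigenvalue (e.g.\ via a Riesz contour integral) to produce continuous eigenvectors.
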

It is important to realize that the assumption of Theorem \ref{thm:1parSmoothSVD} is not at all restrictive but, rather, what one should generally expect to hold for a one parameter matrix valued function. In fact, one can show (again, see \cite{Dieci1999OnSD}) that the set of real $\nxn$ matrices having repeated singular values has dimension $n^2-2$, and therefore a generic\footnote{We refer to \cite{hirsch_diffTopology} for necessary background on the concept of genericity.} $\cont^k$ path $t\in\R\mapsto A(t)\in\Rnxn$ will not meet it. On the other hand, the set of real $\nxn$ matrices having rank equal to $n-1$ can be easily seen to have dimension $n^2-1$, and therefore one should expect a $\cont^k$ path $A(t)$ of $\nxn$ real matrices to lose rank (from $n$ to $n-1$) at isolated points. At such points, a singular value of $A(t)$ will become zero, in which case it generally needs to be allowed to become negative to retain its smoothness (and that of the corresponding singular vector). This is why the SVD resulting from Theorem \ref{thm:1parSmoothSVD} is called  \emph{signed} SVD.

\bigskip\noindent{\bf Two parameters.} The dimension argument used above shows that one \emph{should} expect matrix valued functions of two real parameters to have coalescing singular values at isolated points in parameters' space. In addition, one can show (see \cite{MR2530255}) that generic coalescence of singular values prevents a matrix valued function of two parameters from having even continuous singular vectors in a neighbourhood of the point of coalescence. If the singular values remain always distinct, however, singular values and singular vectors can be defined so to retain the same degree of smoothness of the matrix function. We formalize this fact below.

\begin{theorem}[Adapted from \cite{MR2530255}, Theorem 2.10 and Remark 2.11]\label{thm:2parSmoothSVD} Let $(s_1,s_2)\in\Omega\mapsto A(s_1,s_2)\in\Rnxn$ be $\cont^k$, with $k\ge 0$ and $\Omega\subset\R^2$ a closed rectangle. If the singular values of $A(s_1,s_2)$ are distinct for all $(s_1,s_2)\in\Omega$, then its singular values and singular vectors can be defined to be $\cont^k$ functions of $(s_1,s_2)$.
 \end{theorem}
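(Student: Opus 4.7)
The plan is to reduce the statement to the smooth perturbation theory of symmetric eigenvalue problems and then exploit the simple connectedness of the rectangle $\Omega$ to resolve the residual sign ambiguity inherent to singular vectors.

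First I would pass to the Gramian matrices $B(s_1,s_2) \bydef A(s_1,s_2)^T A(s_1,s_2)$ and $C(s_1,s_2) \bydef A(s_1,s_2) A(s_1,s_2)^T$. Both are symmetric, positive semidefinite, of class $\cont^k$ in $(s_1,s_2)$, and share the same spectrum, equal to $\{\sigma_1^2,\ldots,\sigma_n^2\}$, where $\sigma_1>\sigma_2>\cdots>\sigma_n\ge 0$ are the singular values of $A$, labelled in (strictly) decreasing order as allowed by the hypothesis. Since the squares of pairwise distinct non-negative numbers are themselves pairwise distinct, $B$ and $C$ have $n$ simple eigenvalues at every $(s_1,s_2)\in\Omega$. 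Classical analytic perturbation theory for symmetric matrices with simple spectrum (e.g. Kato) then yields, on a neighbourhood of each point of $\Omega$, $\cont^k$ eigenvalues $\mu_i=\sigma_i^2$ and $\cont^k$ rank-one spectral projectors $P_i^B$ and $P_i^C$ onto the corresponding eigenlines.

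Next I would lift these projectors to globally defined $\cont^k$ unit eigenvectors on $\Omega$. For each index $i$, the two unit eigenvectors of $B(s_1,s_2)$ inside $\image(P_i^B(s_1,s_2))$ constitute the fibres of a $\cont^k$ double cover of $\Omega$. Since the closed rectangle $\Omega$ is contractible, this cover is trivial; an explicit trivialization can be built by fixing a reference sign at one corner of $\Omega$ and propagating it along smooth paths using the one-parameter result of Theorem~\ref{thm:1parSmoothSVD}, path-independence being guaranteed by simple connectedness. This produces global $\cont^k$ right singular vectors $v_i$, and an identical construction applied to $C$ yields global $\cont^k$ left singular vectors $u_i$. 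Signed singular values are then recovered pointwise by the relation $A(s_1,s_2)v_i(s_1,s_2)=\sigma_i(s_1,s_2)u_i(s_1,s_2)$, which determines a unique scalar $\sigma_i(s_1,s_2)\in\R$ (both sides lying in the line $\image(P_i^C(s_1,s_2))$) and that scalar inherits the $\cont^k$ regularity from $A$, $v_i$, and $u_i$. Away from points where the ordinary non-negative singular value vanishes, each $\sigma_i$ retains a fixed sign; at isolated rank-drop points the sign of the smallest one may flip, which is precisely the signed convention already anticipated in the one-parameter case.

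The main obstacle is the global lifting step: constructing a coherent sign choice simultaneously over a two-dimensional parameter region. Along a single path this is elementary, as one can propagate by continuity via Theorem~\ref{thm:1parSmoothSVD}; in two parameters one must additionally exclude monodromy of the double cover, which is exactly what simple connectedness of $\Omega$ provides. Carrying this argument out in the $\cont^k$ category (rather than merely in the continuous one) is the delicate point, and is where the hypothesis of distinct singular values throughout $\Omega$ is indispensable: as recalled before the theorem, a generic coalescence of singular values would obstruct even continuous selection of singular vectors in a neighbourhood of the coalescence point.
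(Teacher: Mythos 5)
The paper does not supply a proof of this theorem: it is stated as imported from the reference \cite{MR2530255} (Theorem~2.10 and Remark~2.11). So there is no in-paper argument to compare against line by line; I therefore assess your proof on its own merits, and in relation to the standard approach used in the SVD-continuation literature that the citation belongs to.

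Your argument is correct, and it is essentially the same route as in that literature: pass to the Gramians $B=A^TA$ and $C=AA^T$, which are symmetric and $\cont^k$ with simple spectrum (distinct nonnegative singular values give distinct squares); invoke the smooth dependence of simple eigenvalues and rank-one spectral projections (for $k\ge 1$ via the contour-integral/implicit-function-theorem representation, and for $k=0$ by continuity of the resolvent and hence of the Riesz projector); then globalize the local unit eigenvector sections over the contractible region $\Omega$. Defining the signed singular value by $\sigma_i=u_i^TAv_i$ is exactly the right device: it is manifestly $\cont^k$, it satisfies $Av_i=\sigma_i u_i$ and $\sigma_i^2=\mu_i$, and it allows $\sigma_n$ to cross zero smoothly, which is precisely the ``signed SVD'' convention the paper emphasizes after the theorem. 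One small stylistic remark: appealing to Theorem~\ref{thm:1parSmoothSVD} for path propagation plus a path-independence argument is a somewhat roundabout way of trivializing the sign double cover. It is cleaner (and avoids the appearance of circularity) to observe directly that a two-sheeted covering of a contractible space must be trivial, equivalently that the obstruction to a coherent sign choice lives in $H^1(\Omega;\mathbb{Z}/2)=0$, so the local $\cont^k$ sections patch together after constant sign flips, which do not affect regularity. With that cosmetic adjustment the proof is complete and rigorous for all $k\ge 0$.
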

 
Like the SVD in Theorem \ref{thm:1parSmoothSVD}, the SVD in Theorem \ref{thm:2parSmoothSVD} is also a signed SVD; that is, the singular values of \( A(s_1, s_2) \) must be allowed to become negative to preserve their smoothness and that of the singular vectors.

\subsubsection{A topological test for cusp bifurcations}\label{subsubsec:cuspDetect} We are now in the position for stating our main result about detection of cusp bifurcations. Our ultimate goal is to define, by considering the continuous SVD of the Jacobian matrix $D_xf(x,\lambda)$ on (portions of) $\cM$, a continuous map $g$ that has a zero whenever a cusp bifurcation occurs for \eqref{eq:general_vector_field}, and use the Poincaré index as a tool to detect such zeros. Although in Sections \ref{subsec:numericalDetection} and \ref{sec:applications} we will apply our method to simplices of triangulations produced by the continuation algorithm described in Section \ref{subsec:2Dcont}, here the method can (and will be) described as a stand-alone tool for detecting cusp bifurcation points on a simply connected region of a 2-manifold.
 
Consider a portion $\cN$ of $\cM$ which is the embedded image of a closed, bounded, simply connected region $\Omega\subset\R^2$, parametrized in terms of two parameters $(s_1,s_2)\in\Omega$:
\begin{equation}\label{eq:paramN}
(s_1,s_2)\in\Omega\mapsto\bx(s_1,s_2)\in\cN,
\end{equation}
and such that $\bx(\cdot)$ maps $\partial\Omega$ homeomorphically onto $\partial\cN$. Recall (see Lemma \ref{lem:cusp}) that necessary conditions for having a cusp bifurcation at $(x,\lam)$ are:
\begin{itemize}
 \item[(i)] $D_xf (x,\lam)$ has a 1-dimensional kernel;
 \item[(ii)] $w^T D_{xx}f(x,\lam)(v,v)=0$, where $D_xf (x,\lam)v=0$ and $D_xf (x,\lam)^Tw=0$, with $w$ and $v$ non-zero vectors in $\R^n$.
\end{itemize}
These conditions can be easily rephrased in terms of the singular value decomposition of $D_xf (x,\lam)$. In fact, let $D_xf (x,\lam)=W\Sigma V^T$ be a SVD, with $\Sigma=\diag(\sigma_1,\ldots,\sigma_n)$, $\sigma_1\ge\ldots\ge\sigma_n\ge 0$ and $W=[w_1,\ldots,w_n]$, $V=[v_1,\ldots,v_n]$ real orthogonal and partitioned by columns. Then, necessary conditions for a cusp bifurcation to occur at $(x,\lam)$ are:
\begin{equation}\label{eq:cuspConditions}
\begin{split}
\text{(i)} &\quad \sigma_n=0 \text{ and } \sigma_{n-1}>0\,;\\
\text{(ii)} &\quad w_n^T D_{xx}f (x,\lam)(v_n,v_n)=0\,.
\end{split}
\end{equation}
Now, consider the embedding $\bx(s_1,s_2)$ in \eqref{eq:paramN}, and define the following function
\begin{equation}\label{eq:matfunDetect}
(s_1,s_2)\in\Omega\mapsto J(s_1,s_2) \bydef D_xf (\bx(s_1,s_2))\in\Rnxn.
\end{equation}
Note that $J$ is a continuous matrix valued function of $(s_1,s_2)\in\Omega$. If $J$ has distinct singular values everywhere in $\Omega$, then it admits, by virtue of Theorem \ref{thm:2parSmoothSVD}, a continuous signed SVD:
\begin{equation}\label{eq:2parSignedSVD}
 J(s_1,s_2)=W(s_1,s_2)\Sigma(s_1,s_2)V^T(s_1,s_2),\quad (s_1,s_2)\in\Omega,
\end{equation}
with $\Sigma(s_1,s_2)=\diag(\sigma_1(s_1,s_2),\ldots,\sigma_n(s_1,s_2))$, $\sigma_1(s_1,s_2)>\ldots>\sigma_{n-1}(s_1,s_2)>\sigma_n(s_1,s_2)$, and $W(s_1,s_2), V(s_1,s_2)$ real orthogonal and partitioned by columns, as above. Under the last assumption, the following map
\begin{equation}\label{eq:gMapDef}
(s_1,s_2)\in\Omega\mapsto g(s_1,s_2)=\bmat{g_1(s_1,s_2) \\ g_2(s_1,s_2)} \bydef 
\bmat{\sigma_n(s_1,s_2) \\ w_n(s_1,s_2)^TD_{xx}f(\bx(s_1,s_2))(v_n(s_1,s_2),v_n(s_1,s_2))}\in\R^2
\end{equation}
is continuous on $\Omega$.

We are now ready to state the main result of this section. In a nutshell, the next theorem yields the following  criterion for detecting points where a cusp bifurcation occurs on $\cN$: compute the Poincaré index of the boundary $\partial\cN$ of $\cN$ with respect to $g$, and check if it is non-zero.
\begin{theorem}\label{thm:mainDetect} Let $\bx$, $\Omega$ and $\cN$ be as in \eqref{eq:paramN}. Let $J$ be as in \eqref{eq:matfunDetect}, and suppose it has distinct singular values for all $(s_1,s_2)\in\Omega$. Let $g$ be as in \eqref{eq:gMapDef}. Consider the Poincaré index $\cI_g(\partial\cN)$. If $\cI_g(\partial\cN)\ne 0$, then the necessary conditions \eqref{eq:cuspConditions} for a cusp bifurcation to occur at a point on $\cN$ are satisfied.
\end{theorem}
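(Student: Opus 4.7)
The plan is to chain together the two building blocks that were carefully set up: the continuous signed SVD of $J(s_1,s_2)$ under the distinct-singular-values hypothesis, and the Poincaré index criterion for existence of zeros. Since the entries of $J$ are continuous in $(s_1,s_2)\in\Omega$ and the singular values are assumed distinct throughout $\Omega$, Theorem~\ref{thm:2parSmoothSVD} supplies a continuous signed SVD $J(s_1,s_2)=W(s_1,s_2)\Sigma(s_1,s_2)V^T(s_1,s_2)$ on $\Omega$, with a strict ordering $\sigma_1>\sigma_2>\cdots>\sigma_n$. Because $f$ is $\cont^3$ in $x$ and $\bx$ is continuous on $\Omega$, the map $D_{xx}f(\bx(s_1,s_2))$ is continuous; combined with the continuity of $v_n$ and $w_n$, this yields continuity of the second component $g_2$. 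The first component $g_1=\sigma_n$ is continuous by Theorem~\ref{thm:2parSmoothSVD}, so $g:\Omega\to\R^2$ is continuous.

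Next I would pass the Poincaré index machinery from $\partial\cN$ to $\partial\Omega$. Because $\bx$ maps $\partial\Omega$ homeomorphically onto $\partial\cN$, the index $\cI_g(\partial\cN)$ is (up to a sign depending on orientation) equal to the index of the Jordan curve $\partial\Omega$ with respect to $g$ viewed as a map on $\Omega\subset\R^2$. The hypothesis that $\cI_g(\partial\cN)$ is defined implicitly ensures that $g$ has no zeros on $\partial\Omega$. To apply Corollary~\ref{cor:poincare}, which requires an open simply connected domain, one extends $g$ to a small open neighborhood of $\Omega$: the matrix function $J$ extends continuously because $f$ is $\cont^3$ and $\bx$ extends slightly beyond $\Omega$ (or one uses a continuous extension via Tietze/Uryhson), and the distinct-singular-values condition, being open, persists in a neighborhood, so the signed SVD extends continuously there. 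Then Corollary~\ref{cor:poincare} delivers a point $(s_1^*,s_2^*)$ in the interior of $\Omega$ with $g(s_1^*,s_2^*)=0$.

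Finally, I would read off the cusp necessary conditions at the corresponding point $(x^*,\lambda^*)=\bx(s_1^*,s_2^*)\in\cN$. The equality $g_1(s_1^*,s_2^*)=0$ gives $\sigma_n(s_1^*,s_2^*)=0$; by the strict signed ordering $\sigma_{n-1}>\sigma_n$ we automatically get $\sigma_{n-1}(s_1^*,s_2^*)>0$, which is precisely condition (i) in \eqref{eq:cuspConditions}. The equality $g_2(s_1^*,s_2^*)=0$ is exactly condition (ii), with $v=v_n(s_1^*,s_2^*)$ and $w=w_n(s_1^*,s_2^*)$ as the right and left singular vectors associated to the vanishing singular value, which respectively span $\ker D_xf(x^*,\lambda^*)$ and $\ker D_xf(x^*,\lambda^*)^T$.

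The main obstacle is the mildly technical mismatch between Corollary~\ref{cor:poincare} (which needs an open simply connected domain) and the closed region $\Omega$; the resolution is the small extension argument sketched above, exploiting that the distinct-singular-values condition and the continuity of $f$ and $\bx$ are robust under perturbation of the parameter domain. The rest is essentially a bookkeeping exercise translating $g=0$ back into the singular-value formulation of the defining cusp conditions.
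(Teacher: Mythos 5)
Your proof is correct and follows the paper's own route: it is an application of Corollary~\ref{cor:poincare} to the map $g$, whose continuity on $\Omega$ is supplied by the distinct-singular-values hypothesis via Theorem~\ref{thm:2parSmoothSVD}. The paper dispatches the result in one line as an immediate consequence of that corollary; you have simply filled in the bookkeeping (continuity of $g$, the orientation-preserving transfer of the index from $\partial\cN$ to $\partial\Omega$, the open-versus-closed domain technicality, and the observation that strict ordering forces $\sigma_{n-1}>0$ at the zero of $\sigma_n$) that the paper leaves implicit.
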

\begin{proof}
The statement is an immediate consequence of Corollary \ref{cor:poincare} and of how the map $g$ is defined through \eqref{eq:gMapDef}.
\end{proof}
\begin{remark} Theorem \ref{thm:mainDetect} gives a necessary condition for a cusp bifurcation to occur; in order to make it also sufficient, one should check that the non-degeneracy condition $c\ne 0$ of Lemma \ref{lem:cusp} is satisfied, and that only one eigenvalue of $D_xf(x,\lambda)$ has zero real part.
\end{remark}
\begin{remark} In preparation for (and in the statement of) Theorem \ref{thm:mainDetect}, we made several assumptions that are stronger than     necessary, or may appear limiting. We did so mostly to ease the presentation. We elaborate on this below. 
\begin{enumerate}
 \item In general, there is no reason to expect $J(s_1,s_2)$ to have distinct singular values on $\Omega$ (see the discussion in Section \ref{subsubsec:smoothSvd}), but this assumption can be loosened, and the effects of $J$ failing to meet the loosened assumptions dealt with in an effective way. Continuity of the map \eqref{eq:gMapDef} relies on continuity of $\sigma_n(s_1,s_2)$, $w_n(s_1,s_2)$ and $v_n(s_1,s_2)$. This would be guaranteed by the sole requirement that $\sigma_{n-1}(s_1,s_2)>\sigma_n(s_1,s_2)$ everywhere on $\Omega$. Furthermore, generic coalescence of $\sigma_n$ and $\sigma_{n-1}$ at one point in $\Omega$ is associated to a sign change of $w_n$ and $v_n$, and therefore of $g_2$ in \eqref{eq:gMapDef} upon tracing a sufficiently small loop around the point of coalescence (see \cite{MR2530255}, Section 3); if detected, this situation can be handled by splitting $\cN$ into disjoint simply connected portions so that cusp bifurcation and coalescence for the pair of singular values $(\sigma_{n-1},\sigma_n)$ do not occur in the same portion; we expect this to be possible since having a cusp bifurcation at a point of coalescence is highly non-generic for functions of two parameters (see \cite{MR2530255}, Remark 2.16).
\item Since \eqref{eq:2parSignedSVD} is a signed SVD, one cannot exclude that a loss of rank of $J$ (potentially associated to a cusp bifurcation) occurs due to some $\sigma_k$ becoming zero, with $k<n$. To deal with this, one should define one map $g$ for each of the singular values of $J$, in a similar way to what we have done for $\sigma_n$, and make the obvious adjustments to \eqref{eq:cuspConditions} and Theorem \ref{thm:mainDetect}. In practice, one can progressively do this for the singular values $\sigma_n, \sigma_{n-1}, \sigma_{n-2}, \ldots$ that do change sign along $\partial\cN$.
\end{enumerate}
\end{remark}

\subsection{Numerical implementation of the topological test}\label{subsec:numericalDetection}
In this section we describe how the topological test of Section \ref{subsubsec:cuspDetect} is implemented numerically and integrated with the continuation algorithm described in Section \ref{subsec:2Dcont}.

Let $\cS$ be a simplex of a triangulation of $\cM$, parametrized with respect to two parameters $(s_1,s_2)\in\Omega\subset\R^2$,
\begin{equation*}
(s_1,s_2)\in\Omega\mapsto\bp(s_1,s_2)\in\cS,
\end{equation*}
and let $\cN\subset\cM$ be the embedded\footnote{One possible embedding is the orthogonal projection from $\cS$ to $\cM$.} image of $\cS$
\begin{equation}\label{eq:simplexEmbedding}
\bp(s_1,s_2)\in\cS\mapsto\bx(s_1,s_2)\in\cN,
\end{equation}
such that $\bx(\cdot)$ maps $\partial\cS$ homeomorphically onto $\partial\cN$. From now on, both $\cS$ and $\cN$ will be referred to as \emph{triangles}. Since $\partial\cN$ is a Jordan curve, we can think of it as the image of a continuous map
\begin{equation*}
\gamma:t\in\R\mapsto \gamma(t)\in\partial\cN,
\end{equation*}
that we can take to be periodic of minimum period 1. Recall the definitions of $J$ and $g$ in \eqref{eq:matfunDetect} and \eqref{eq:gMapDef}, respectively. With the assumptions of Theorem \ref{thm:mainDetect}, our task is to numerically compute the index $\cI_g(\cN)$. We do so through the following procedure:
\begin{enumerate}
\item Compute a continuous signed SVD of $J$ on $\partial\cN$, represented by
\begin{equation}\label{eq:signedSVDboundary}
 J(t)=W(t)\Sigma(t)V^T(t),\quad t\in[0,1].
\end{equation}
This we do using the algorithm\footnote{The algorithm has been implemented in Matlab and is freely available on Matlab Central File Exchange at \url{https://www.mathworks.com/matlabcentral/fileexchange/160881-smooth-singular-value-decomp-of-a-real-matrix-function}} described in \cite{DIECI20081255} and \cite{DIECI2011996}, where it has been used to locate points where singular values of a real matrix valued function of two parameters coalesce. Its output is a finite sequence of diagonal matrices $\Sigma_0,\Sigma_1,\ldots,\Sigma_N$ and orthogonal matrices $W_0, W_1, \ldots, W_N, V_0, V_1, \ldots, V_N$ such that
\begin{equation*}
 \Sigma_k\approx\Sigma(t_k),\ W_k\approx W(t_k),\ V_k\approx V(t_k),\ \text{for } k=0,1,\ldots, N,
\end{equation*}
where $0<t_0<t_1<\ldots<t_N=1$ is a partition of the interval $[0,1]$ and ``$\approx$'' means ``as close as machine precision allows''. The fineness of the partition is function of a user-defined tolerance.
\item Use the output of step 1 above to evaluate the function $g$ at all points of the partition $t_0<\ldots<t_N$. This step produces a finite sequence of 2-dimensional vectors 
\begin{equation*}
\left\{\bmat{g_{1,0} \\ g_{2,0}}, \bmat{g_{1,1} \\ g_{2,1}}, \ldots, \bmat{g_{1,N} \\ g_{2,N}}\right\}
\end{equation*}
where $g_{1,k}\approx g_1(t_k)$ for $k=0, \ldots, N$, and similarly for $g_2$. 
\item Build the set of indices $I_1=\{k=0,\ldots, N-1 \text{ such that } g_{1,k}\,g_{1,k+1}<0\}$. If $I_1=\emptyset$, flag $\cN$ as \verb"NO CUSP" triangle and exit the procedure. If $I_1\ne\emptyset$, proceed to next step.
\item Compare $g_{2,0}$ with $g_{2,N}$. If $g_{2,0}\approx -g_{2,N}$, flag $\cN$ as \verb"UNDETERMINED" triangle and exit the procedure. Otherwise, proceed to next step.

\item Build the set of indices $I_2=\{k=0,\ldots, N-1 \text{ such that } g_{2,k}\,g_{2,k+1}<0\}$. If $I_2=\emptyset$, flag $\cN$ as a \verb"NO CUSP" triangle and exit the procedure. If $I_2\ne\emptyset$, proceed to next step.
\item If any of $I_1$ or $I_2$ consists of more than two indices, flag $\cN$ as \verb"UNDETERMINED" triangle and exit the procedure. Otherwise, proceed to next step. 
\item If each of $I_1$ and $I_2$ consists of exactly two indices, say $I_1=\{\iota_1, \iota_2\}$ and $I_2=\{\kappa_1, \kappa_2\}$, and they interlace, i.e. either $\iota_1<\kappa_1<\iota_2<\kappa_2$ or $\kappa_1<\iota_1<\kappa_2<\iota_2$, then flag $\cN$ as \verb"CUSP" triangle and exit the procedure. If $I_1$ and $I_2$ have an index in common, flag $\cN$ as \verb"UNDETERMINED" triangle. Otherwise, flag $\cN$ as \verb"NO CUSP" triangle and exit the procedure.
\end{enumerate}

\begin{remark}\
\begin{enumerate}
\item It is easy to see that, if $g$ is continuous, $I_1$ and $I_2$ must have a even number of elements. Moreover, the interlacing property of the indices in step 6 above guarantees that the vector $g$ visits each of the four quadrants of the Cartesian plane exactly once, either in counterclockwise or in clockwise direction, while $\cN$ is traced once; this means that $\cI_g(\cN)=\pm 1$.
\item If the procedure flags $\cN$ as \verb"CUSP" triangle, then the computer-assisted proof of Section \ref{sec:CAPs} is performed on $\cN$. 
\item Exit at step 4, signifies that the triangle may potentially contain a cusp point --and-- a point of coalescence involving the singular value $\sigma_n$, in which case $g$ fails to be continuous and the test is inconclusive. Exit at step 6 may be associated to disparate situations, from none to several cusp points. In all such situations, one may either successively refine $\cN$ to isolate possible cusps or coalescing points, or simply perform the computer-assisted proof on it. This has occurred to us in a very limited number of cases, in which we have chosen the second option, that is to run the proof. See Section \ref{sec:applications}. 
\item Exit at step 7 with an \verb"UNDETERMINED" flag signals that, along the boundary of a triangle, two points where $g_1$ and $g_2$ (see Equation \eqref{eq:gMapDef}) change sign are so close that it is hard to infere anything about the interlacing between indices in $I_1$ and $I_2$. This event has occurred once to us, when studying the model in Section \ref{sec:metastatic}. We resolved it by adjusting the tolerances for the continuation of the triangulation and of the continuous SVD in Equation \eqref{eq:signedSVDboundary}, but one could also handle it in the same way as suggested in 3.\ above.
\item In principle, one should also verify that the hypotheses of Corollary \ref{cor:poincare} are satisfied. However, we did not do this, as a cusp occurring on one side of $\cN$ is clearly a non-generic event. Furthermore, such an event is automatically detected in step 7 of the detection procedure and thus addressed in point 4 of this remark.
\end{enumerate}
\end{remark}

\section{Computer-assisted proofs for cusp bifurcations} \label{sec:CAPs}

In this section, we introduce the cusp map $F:\R^{4n+4} \to \R^{4n+4}$ whose zeros provide natural candidates for cusps. More precisely, we demonstrate a result in Theorem~\ref{thm:cusp} which states that non-degenerate zeros of $F$ satisfying an extra hypothesis on the eigenvalues of $D_xf(x,\lambda)$, lead to cusp bifurcation points. This is done in Section~\ref{sec:cusp_map}. Then, in Section~\ref{sec:NKT}, we introduce a Newton-Kantorovich Theorem, whose successful application leads to the existence of non-degenerate zeros of $F$. We continue in Section~\ref{sec:spectrum_enclosure} where we introduce a technique based on the Gershgorin Circle Theorem to control all eigenvalues of $D_xf(x,\lambda)$. Finally, we conclude in Section~\ref{sec:procedure_cusp} by presenting a procedure for proving constructively existence of cusp bifurcations.

\subsection{The cusp map and the property of its non-degenerate zeros} \label{sec:cusp_map}

As mentioned earlier, a cusp is a codimension-two bifurcation, and therefore requires solving for two parameters for the phenomenon to occur. The unknowns we solve for are $x$ (the state), the two-dimensional parameter $\lambda \in \R^2$, the vector $v$ in the kernel of $D_xf(x,\lambda)$, the vector $w$ in the kernel of $\left( D_xf(x,\lambda) \right)^T$, and the solution $(h,s)$ of the bordering system \eqref{eq:bordering_system}. Hence, denoting the vector of unknowns by $X \bydef (x,v,w,\lambda,h,s) \in \R^{4n+4}$, define the {\em cusp map} $F:\R^{4n+4} \to \R^{4n+4}$ acting on $X$ as 
\begin{equation} \label{eq:cusp_map}
F(X) = F(x,v,w,\lambda,h,s) \bydef \begin{pmatrix} 
f(x,\lambda) 
\\ 
D_xf(x,\lambda)v + s_1 v
\\
D_xf(x,\lambda)^T w
\\  
v^T v -1 
\\ 
w^T v - 1
\\
D_{x} f (x,\lambda) h + s_2 v + D_{xx} f (x,\lambda)(v,v)
\\
w^T D_{xx} f (x,\lambda)(v,v) 
\\
w^T h
\end{pmatrix}.
\end{equation}

\begin{remark}
The cusp map \eqref{eq:cusp_map} is similar to the augmented map defined in equation (5.2) in \cite{MR787206} (later also used in \cite{Kanzawa_Oishi}) for identifying cusp bifurcation points. However, the authors there did not include the last two equations present in \eqref{eq:cusp_map}, which we do because we want to have access directly to the solution $(h,s)$ of \eqref{eq:bordering_system} necessary to compute the normal form coefficient $c$ given in \eqref{eq:nondegeneracy_coeff_intro}. 
\end{remark}

As the following result demonstrates, a non-degenerate zero $X=(x,v,w,\lambda,h,s)$ of the cusp map has the property (modulo verifying an extra assumption on the eigenvalues of $D_x f(x,\lambda)$) that $(x,\lambda)$ is a cusp bifurcation.  

\begin{theorem} \label{thm:cusp}
Consider a smooth (at least $\cont^3$) map $f:\R^{n+2} \to \R^n$. Assume that $X = (x,v,w,\lambda,h,s) \in \R^{4n+4}$ is a non-degenerate zero of the cusp map $F$ defined in \eqref{eq:cusp_map} and that exactly $n-1$ eigenvalues of $D_x f(x,\lambda)$ have nonzero real parts. Then
\begin{equation} \label{eq:nondegeneracy_coeff}
c \bydef \frac{1}{6} w^T \left( D_{xxx} f (x,\lambda)(v,v,v) + 3 D_{xx} f (x,\lambda)(v,h) \right) \ne 0,
\end{equation}
and therefore there is a cusp bifurcation occurs at the point $(x,\lambda)$ and the normal form on the center manifold of $(x,\lambda)$ is given by 
\begin{equation} \label{eq:cusp_normal_form}
\dot y = c y^3 + O(y^4).
\end{equation}
\end{theorem}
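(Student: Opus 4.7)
The plan is two-stage. First, I would extract from the algebraic system $F(X)=0$ that all hypotheses of Lemma~\ref{lem:cusp} are satisfied except possibly for the nondegeneracy $c\ne 0$. Then I would deduce $c\ne 0$ from the non-degeneracy of $DF(X)$ by exhibiting, in the contrary case $c=0$, a nontrivial element of $\ker DF(X)$.

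For the first stage, left-multiplying the second component of \eqref{eq:cusp_map} by $w^T$ and using the third component together with the normalization $w^Tv=1$ forces $s_1=0$, hence $D_xf(x,\lambda)v=0$. The analogous manipulation on the sixth component, combined with the third and seventh, yields $s_2=0$ and $b := \tfrac12 w^T D_{xx}f(x,\lambda)(v,v) = 0$, and simultaneously identifies $(h,s_2)$ with the unique solution of the bordering system~\eqref{eq:bordering_system}. The hypothesis that exactly $n-1$ eigenvalues of $D_xf(x,\lambda)$ have nonzero real part, together with the eigenvector $v$ just constructed, forces $0$ to be simple and to be the only eigenvalue on the imaginary axis. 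Lemma~\ref{lem:cusp} therefore delivers the theorem as soon as we certify $c\ne 0$.

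For the second stage, assume $c=0$ and take $u=(\xi,\nu,\omega,\mu,\eta,\tau)$ with $(\xi,\mu)=(v,0)$. Writing $A = D_xf(x,\lambda)$, Fredholm alternatives against $A$ and $A^T$ (whose kernels are spanned by $v$ and $w$ respectively) determine the remaining components sequentially. The linearization of equation 2 reduces to $A\nu = -D_{xx}f(v,v) = Ah$ with $\tau_1=0$, so $\nu = h + \alpha v$ with $\alpha$ pinned by equation 4. Equation 3 linearizes to $A^T\omega = -(D_{xx}f(v,\cdot))^Tw$, which is solvable precisely because $b=0$; a particular solution $\omega_0$ plus the multiple of $w$ fixed by equation 5 determines $\omega$. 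The linearization of equation 6 is Fredholm-solvable provided a scalar obstruction vanishes, and a direct computation shows this obstruction equals exactly $\tau_2+6c$, so one takes $\tau_2=-6c$, which vanishes under $c=0$; equation 6 is then solvable and equation 8 fixes the remaining multiple of $v$ in $\eta$.

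The crux of the argument is the linearization of equation 7. Using the substitutions above together with $w^T D_{xxx}f(v,v,v) = 6c - 3 w^T D_{xx}f(v,h)$ from the definition of $c$, equation 7 collapses (at $c=0$) to the scalar identity
\begin{equation*}
\omega_0^T D_{xx}f(v,v) = w^T D_{xx}f(v,h),
\end{equation*}
which follows at once by pairing $A^T\omega_0 = -(D_{xx}f(v,\cdot))^T w$ with $h$ and substituting $Ah = -D_{xx}f(v,v)$. The constructed $u$ then has nonzero $\xi=v$, contradicting the non-degeneracy of $DF(X)$. I expect the main difficulty to be the careful bookkeeping of the third-order tensor contractions and of the sequence of Fredholm solvability conditions; once these are arranged, the emergence of $-6c$ as the Fredholm obstruction for equation 6 together with the cancellation in equation 7 is the heart of the argument, and matches the geometric picture that $c$ measures how transversally the equation $b=0$ cuts the fold curve at the cusp.
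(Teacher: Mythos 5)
Your argument is correct and follows essentially the same route as the paper: extract $s_1=s_2=0$ and $b=0$ algebraically from $F(X)=0$, invoke Lemma~\ref{lem:cusp} once $c\ne 0$ is established, and prove $c\ne 0$ by contradiction by exhibiting a nontrivial kernel vector of $D_XF(X)$ built component by component. Your sequential Fredholm-solvability presentation (with the identifications $\nu=h+\alpha v$, $A^T\omega=-D_{xx}f(v,\cdot)^Tw$, the obstruction $\tau_2+6c$ for the sixth equation, and the pairing $\omega^T D_{xx}f(v,v)=w^T D_{xx}f(v,h)$ that closes the seventh) is, up to an overall sign, the same as the paper's explicit bordering-system construction of $(u_2,\sigma_2)$ and $(u_3,\sigma_3)$.
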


\begin{proof}
Assume that $X=(x,v,w,\lambda,h,s) \in \R^{4n+4}$ satisfies $F(X)=0$ and that $DF(X) \in M_{4n+4}(\R)$ is invertible. First of all, since $F(X)=0$, we get from \eqref{eq:cusp_map} that $w^T D_xf(x,\lambda)=0$, $D_xf(x,\lambda)v + s_1 v=0$ and $w^T v=1$. Hence, $w^T D_xf(x,\lambda)v + s_1 w^T v = 0$, that is $s_1=0$. This implies that $D_xf(x,\lambda)v=0$. Recalling \eqref{eq:coefficient_b}, we conclude that the five first requirements for a cusp bifurcation hold, namely
\begin{equation} \label{eq0:proof_nondegeneracy}
f(x,\lambda)=0,~D_xf(x,\lambda)v=0, ~w^T D_xf(x,\lambda)=0, ~w^T D_{xx} f(x,\lambda)(v,v)=0 ~ \text{and}~ w^T v=1.
\end{equation}
By assumption, exactly $n-1$ eigenvalues of $D_x f(x,\lambda)$ have nonzero real parts, so the sixth requirement for a cusp bifurcation holds. It remains to show that $c$ defined in \eqref{eq:nondegeneracy_coeff} is non vanishing. First, since $D_{x} f (x,\lambda) h + s_2 v + D_{xx} f (x,\lambda)(v,v)=0$, then $w^T D_{x} f (x,\lambda) h + s_2 w^T v + w^TD_{xx} f (x,\lambda)(v,v)=0+s_2+0=0$ (where we used $w^T D_{x} f (x,\lambda)=0$ and $w^TD_{xx} f (x,\lambda)(v,v)=0$), and therefore $s_2=0$. We obtain that 
\begin{equation} \label{eq1:proof_nondegeneracy}
D_{x} f (x,\lambda) h + D_{xx} f (x,\lambda)(v,v)=0.
\end{equation}
Therefore $(h,s)=(h,0)$ solves \eqref{eq:bordering_system} and hence $c$ defined in \eqref{eq:nondegeneracy_coeff} coincides with the one of equation \eqref{eq:nondegeneracy_coeff_intro}.  To demonstrate that $c \ne 0$, we argue by contradiction, that is, assume that 
\begin{equation} \label{eq2:proof_nondegeneracy}
w^T \left( D_{xxx} f (x,\lambda)(v,v,v) + 3 D_{xx} f (x,\lambda)(v,h) \right) = 0.
\end{equation}
The remaining of the proof is to produce a non zero vector $\hat{X} \in \R^{4n+4}$ such that $DF(X)\hat{X}=0$, violating the hypothesis that $DF(X)$ is invertible. This process begins by defining $(u_2,\sigma_2) \in \R^{n+1}$ to be the unique vector solving
\begin{equation} \label{eq:bordering_system2}
\begin{pmatrix}
D_{x} f (x,\lambda)^T & w \\ v^T & 0
\end{pmatrix}
\begin{pmatrix}
u_2 \\ \sigma_2
\end{pmatrix}
= 
\begin{pmatrix}
D_{xx} f (x,\lambda)^T(w,v) \\ -v^T h
\end{pmatrix},
\end{equation}
whose existence (and uniqueness) follows from the invertibility of the matrix in the left-hand side of \eqref{eq:bordering_system2} (e.g. see Lemma 5.3 in \cite{MR2071006}). Now, since 
\[
v^T D_{xx} f (x,\lambda)^T(w,v) = w^T D_{xx} f (x,\lambda)(v,v)=0,
\]
we conclude that $\sigma_2=v^T(D_{x} f (x,\lambda)^T u_2+ \sigma_2 w )=  v^T D_{xx} f (x,\lambda)^T(w,v)=0$, and therefore
\begin{equation} \label{eq3:proof_nondegeneracy}
D_{x} f (x,\lambda)^T u_2= D_{xx} f (x,\lambda)^T(w,v).
\end{equation}
Now, combining \eqref{eq1:proof_nondegeneracy} and \eqref{eq3:proof_nondegeneracy}, we get that
\begin{align*}
w^T D_{xx}f(x,\lambda)(v,h) & = h^T D_{xx}f(x,\lambda)^T(v,w) \\
& = h^T D_{x} f (x,\lambda)^T u_2 \\
& = u_2^T D_{x} f (x,\lambda)h \\
& = -u_2^T D_{xx} f (x,\lambda)(v,v)
\end{align*}
that is 
\begin{equation} \label{eq4:proof_nondegeneracy}
w^T D_{xx}f(x,\lambda)(v,h) = -u_2^T D_{xx} f (x,\lambda)(v,v).
\end{equation}
Let
\begin{equation} \label{eq:gamma:proof_nondegeneracy}
\gamma \bydef v^T h \in \R.
\end{equation}
and define $(u_3,\sigma_3) \in \R^{n+1}$ to be the unique vector solving
\begin{equation} \label{eq:bordering_system3}
\hspace{-.24cm}
\begin{pmatrix}
D_{x} f (x,\lambda) & v \\ w^T & 0
\end{pmatrix}
\begin{pmatrix}
u_3 \\ \sigma_3
\end{pmatrix}
= 
\begin{pmatrix}
D_{xxx} f (x,\lambda)(v,v,v)+3D_{xx} f (x,\lambda)(v,h) - \gamma D_{xx} f (x,\lambda)(v,v) \\ -h^T u_2
\end{pmatrix}.
\end{equation}
Now, since we assume that $c=0$, then \eqref{eq2:proof_nondegeneracy} holds and hence 
\begin{equation} \label{eq4b:proof_nondegeneracy}
w^T \left( D_{xxx} f (x,\lambda)(v,v,v)+3D_{xx} f (x,\lambda)(v,h) - \gamma D_{xx} f (x,\lambda)(v,v) \right) =0
\end{equation}
which implies (using a similar argument as before) that $\sigma_3=0$. This implies that 
\begin{equation} \label{eq5:proof_nondegeneracy}
D_{x} f (x,\lambda) u_3 = D_{xxx} f (x,\lambda)(v,v,v)+3D_{xx} f (x,\lambda)(v,h) - \gamma D_{xx} f (x,\lambda)(v,v).
\end{equation}
Denote 
\begin{equation} \label{eq6:proof_nondegeneracy}
\hat{X} \bydef (-v,-h+\gamma v ,u_2,u_3,0,0) \in \R^{4n+4}.
\end{equation}
We conclude the proof by showing that each component of the following vector vanishes
\[
D_X F (X) \hat{X} = 
{\tiny
\begin{pmatrix}
-D_xf(x,\lambda) v 
\\
-[D_{xx}f(x,\lambda)(v,v) + D_xf(x,\lambda)h] + \gamma D_xf(x,\lambda) v
\\
-D_{xx}f(x,\lambda)^T (w,v) + D_xf(x,\lambda)^T u_2
\\
2(-v^Th+\gamma v^Tv)
\\
-w^T h+\gamma w^T v +  v^T u_2
\\
-D_{xxx} f(x,\lambda) (v,v,v) - 3 w^T D_{xx} f(x,\lambda) (v,h) + \gamma D_{xx} f(x,\lambda)(v,v) + D_{x} f(x,\lambda)u_3
\\
-w^T D_{xxx} f(x,\lambda) (v,v,v) - 2 w^T D_{xx} f(x,\lambda) (v,h) + 2 \gamma w^T D_{xx} f(x,\lambda) (v,v) + u_2^T D_{xx} f(x,\lambda)(v,v) 
\\
h^T u_2 + w^T u_3
\end{pmatrix}}. 
\]
The first component vanishes since by construction of $X$, $F(X)=0$ and then $D_xf(x,\lambda) v=0$. The second one equals zero by \eqref{eq1:proof_nondegeneracy}. The construction of $u_2$ in \eqref{eq:bordering_system2} and \eqref{eq3:proof_nondegeneracy} imply that the third component of $D_X F (X) \hat{X}$ vanishes. From the definition of $\gamma$ in \eqref{eq:gamma:proof_nondegeneracy} and using $v^Tv=1$, the fourth component is zero. The fifth one is zero since $w^T h=0$, $w^T v=1$ and $v^T u_2=-v^TH = \gamma$, which follows from the second component of \eqref{eq:bordering_system2}. The sixth one vanishing follows directly from the choice of $u_3$ which solves \eqref{eq5:proof_nondegeneracy}. Combining \eqref{eq2:proof_nondegeneracy}, \eqref{eq4:proof_nondegeneracy} and the fact that $w^T D_{xx} f (x,\lambda)(v,v)=0$ implies that the seventh component vanishes. We use the second component of \eqref{eq:bordering_system3} to obtain that the eighth component of $D_X F (X) \hat{X}$ is zero. This allows to conclude that $D_X F (X)$ has a nontrivial kernel, which contradicts the hypothesis that $X$ is a non-degenerate zero. This concludes the proof that $c \ne 0$. Hence, there is a cusp bifurcation at $(x,\lambda)$ and using the theory from \cite{MR2071006}, the normal form on the center manifold of $(x,\lambda)$ is given by \eqref{eq:cusp_normal_form}.
\end{proof}

\subsection{Newton-Kantorovich Theorem and non-degeneracy of zeros} \label{sec:NKT}

The following theorem (see \cite{MR4336016,MR3542954,MR1639986} for similar statements) provides a constructive method for demonstrating the existence of zeros of nonlinear differentiable mappings. A direct consequence of the result is that solutions so obtained are non-degenerate. In the result below, choose any norm $\| \cdot \|$ on $\R^N$.

\begin{theorem}[\bf A Newton-Kantorovich theorem] \label{thm:NK}
Consider a $\cont^1$ map $F:U \to \R^N$, where $U \subset \R^N$ is open. Fix $r_*>0$ and assume that the two upper bounds $Y,Z \ge 0$ satisfy
\begin{equation} \label{eq:boundsY_Z}
\| A F(\bX)\| \leq Y \quad \text{and} \quad
\sup_{\xi \in \overline{B_{r_*}(\bX)}} \| I - A D_XF(\xi) \| \leq Z.
\end{equation}
If 
\begin{equation} \label{eq:radii_polynomial_inequality}
Z<1 \quad \text{and} \quad r_0 \bydef \frac{Y}{1-Z} < r_*,
\end{equation} 
then for each $r \in \left(r_0, r_* \right]$, there is a unique $\tX \in B_{r}(\bX) $ such that $F(\tX) = 0$. Moreover, the vector $\tX \in \R^N$ is a non-degenerate zero of $F$, that is $D_XF(\tX)$ is invertible. 
\end{theorem}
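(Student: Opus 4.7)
The plan is to convert the existence of a zero of $F$ into a fixed-point problem for the Newton-like operator
\[
T(X) \bydef X - A F(X), \qquad X \in \overline{B_{r_*}(\bX)}.
\]
First I would use the second bound in \eqref{eq:boundsY_Z}, evaluated at $\xi = \bX$, to conclude via a Neumann series that $A D_X F(\bX)$ is invertible, and therefore that the matrix $A$ itself is invertible. This guarantees that fixed points of $T$ in $\overline{B_{r_*}(\bX)}$ are exactly the zeros of $F$ in the same set.

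The heart of the argument is to show that $T$ is a strict contraction with constant $Z$ on $\overline{B_{r_*}(\bX)}$. Since $DT(\xi) = I - A D_X F(\xi)$, the mean value inequality applied along the line segment joining two points $X_1, X_2 \in \overline{B_{r_*}(\bX)}$ --- a segment that remains in the convex ball --- combined with the hypothesis $\sup_{\xi}\|I - A D_X F(\xi)\| \leq Z$ yields $\|T(X_1) - T(X_2)\| \leq Z \|X_1 - X_2\|$. Next I would verify that $T$ maps each closed ball $\overline{B_r(\bX)}$ with $r \in [r_0, r_*]$ into itself: decomposing $T(X) - \bX = \bigl(T(X) - T(\bX)\bigr) + \bigl(-A F(\bX)\bigr)$ and combining the contraction estimate with $\|A F(\bX)\| \leq Y$ gives $\|T(X) - \bX\| \leq Z r + Y \leq r$, the last inequality being equivalent to $r \geq Y/(1-Z) = r_0$.

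At this point, the Banach fixed-point theorem supplies, for every $r \in [r_0, r_*]$, a unique fixed point $\tX \in \overline{B_r(\bX)}$ of $T$; by uniqueness these fixed points all coincide. Choosing $r = r_0$ yields $\|\tX - \bX\| \leq r_0$, so $\tX \in B_r(\bX)$ for every $r \in (r_0, r_*]$. Because $A$ is invertible, $T(\tX) = \tX$ is equivalent to $F(\tX) = 0$. For the non-degeneracy claim, I would apply the second bound in \eqref{eq:boundsY_Z} at $\xi = \tX$ to obtain $\|I - A D_X F(\tX)\| \leq Z < 1$, invoke the Neumann series once more to deduce the invertibility of $A D_X F(\tX)$, and conclude that $D_X F(\tX)$ is invertible since $A$ is.

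I do not anticipate a serious obstacle in this argument: the main care required is to apply the mean value inequality properly in the vector-valued setting (by integrating $DT$ along a segment contained in the convex set $\overline{B_{r_*}(\bX)}$), and to be mindful of the distinction between the open ball $B_r(\bX)$ appearing in the conclusion and the closed ball $\overline{B_r(\bX)}$ on which the Banach fixed-point theorem is actually applied.
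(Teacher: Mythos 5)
Your proposal is correct and follows essentially the same route as the paper: you introduce the Newton-like operator $T(X)=X-AF(X)$, obtain the contraction constant $Z$ from the mean value inequality, check the self-mapping estimate $\|T(X)-\bX\|\le Zr+Y\le r$, invoke the Banach fixed-point theorem, and pass between fixed points of $T$ and zeros of $F$ via the Neumann-series invertibility of $A$. The only (harmless) differences are ordering choices: you establish invertibility of $A$ up front and run the fixed-point argument at $r=r_0$ before noting the fixed point lies in every larger open ball, whereas the paper fixes $r\in(r_0,r_*]$ first and derives the invertibility of $A$ afterward.
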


\begin{proof}
Define the Newton-like operator $T:U \to \R^N$ by $T(X) = X - A F(X)$ and note that $D_XT(X) = I - A  D_XF(X)$. Since \eqref{eq:radii_polynomial_inequality} holds, pick any $r \in \left(r_0, r_* \right]$. Note that $r_0=\frac{Y}{1-Z} < r$ implies that $Zr + Y < r$. The idea of the proof is to show that $T: \overline{B_{r}(\bX)} \to B_{r}(\bX)$ is a contraction. 

For $X_1,X_2 \in \overline{B_{r}(\bX)}$ we use the Mean Value Inequality to get that
\begin{align*}
\|T(X_1) - T(X_2)\| &\le \sup_{\xi \in \overline{B_{r}(\bX)}} \|D_XT(\xi)\| \|X_1 - X_2 \| 
\\
& = \sup_{\xi \in \overline{B_{r}(\bX)}} \|I - A D_XF(\xi)\| \|X_1 - X_2 \| 
\\
&\le \sup_{\xi \in \overline{B_{r_*}(\bX)}} \| I - AD_XF(\xi)\|  \|X_1 - X_2 \| 
\\
& \le Z \|X_1 - X_2 \| .
\end{align*}
Since $ Z < 1 $, $T$ is a contraction on $\overline{B_{r}(\bX)}$. To see that $T$ maps the closed ball into itself (in fact in the open ball) choose $X \in \overline{B_{r}(\bX)}$, and observe that
\begin{align*}
\|T(X) - \bX \| &\leq \| T(X) - T(\bX)\| + \|T(\bX) - \bX \| \\
&\leq Z \|X - \bX\| + \| A F(\bX)\| \\
&\leq Zr + Y < r,
\end{align*}
which shows that $T(X) \in B_{r}(\bX)$ for all $X \in \overline{B_{r}(\bX)}$. It follows from the contraction mapping theorem that there exists a unique $\tX \in \overline{B_{r}(\bX)}$ such that $T(\tX)=\tX \in B_{r}(\bX)$. Since $Z<1$, we get 
\[
 \| I - A D_XF(\bX) \| \le \sup_{\xi \in \overline{B_{r_*}(\bX)}} \| I - A D_XF(\xi) \| \leq Z <1,
\]
and hence $A DF(\bX)$ is invertible using a Neumann series argument. From this we get that $A$ is invertible. By invertibility of $A$ and by definition of $T$, the fixed points of $T$ are in one-to-one correspondence with the zeros of $F$. We conclude that there is a unique $\tX \in B_{r}(\bX)$ such that $F(\tX)=0$. Finally, since 
\[
 \| I - A D_XF(\tX) \| \le \sup_{\xi \in \overline{B_{r_*}(\bX)}} \| I - A D_XF(\xi) \| \leq Z <1,
\]
we get that $A D_XF(\tX)$ is invertible and therefore $D_XF(\tX)$ is invertible. This concludes the proof that $\tX \in \R^N$ is a non-degenerate zero of $F$.
\end{proof}

\begin{remark}
Typically, in Theorem~\ref{thm:NK}, the vector $\bX$ is an approximation $\bX$ of $F=0$ obtained numerically (e.g. using Newton's method) and $A \approx D_XF^{-1}(\bX)$ is an approximate inverse of $D_XF(\bX)$.
\end{remark}

\subsection{Rigorous enclosure of \boldmath$\sigma(D_x f(x,\lambda))$\unboldmath} \label{sec:spectrum_enclosure}

In this short section, we demonstrate how to control the spectrum $\sigma(D_x f(x,\lambda))$, assuming that the matrix $D_x f(x,\lambda)$ is diagonalizable over the complex numbers, which is a generic condition. The idea is to introduce a pseudo-diagonalization technique and then use the Gershgorin Circle Theorem to control all eigenvalues. First, assume that using Theorem~\ref{thm:NK}, we proved the existence of $X=(x,v,w,\lambda,h,s) \in B_{r}(\bX) $ such that $F(X) = 0$ for some small $r>0$. Denote $\bX=(\bar x,\bar v,\bar w,\bar \lambda,\bar h,0) \in \R^{4n+4}$. Note that $D_x f(x,\lambda)$ is a matrix whose entries can be rigorously enclosed using the control that we have over the location of $(x,\lambda)$. For instance, if the norm in $\R^{4n+4}$ is the sup norm $\| \cdot \| = \| \cdot \|_\infty$, then we know that 
\begin{equation} \label{eq:control_of_entries}
|\lambda_j - \bar \lambda_j| \le r, \text{ for } j=1,2 \quad \text{and} \quad
|x_i - \bar x_i| \le r, \text{ for } i=1,\dots,n.
\end{equation}
Using interval arithmetic and the control from \eqref{eq:control_of_entries}, we can construct an interval matrix $\bf M$ such that $D_x f(x,\lambda) \subset {\bf M}$, where the inclusion is to be understood component-wise. Then, using a standard numerical linear algebra techniques, compute a list of approximate numerical eigenvectors $\bar \xi_1,\dots,\bar \xi_n \in \mathbb{C}^n$ and define the matrix $P$ columnwise as follows: $P=[\bar \xi_1,  \bar \xi_2, \ldots, \bar\xi_n] \in M_n(\mathbb{C})$. Consider the matrix $\Lambda \bydef P^{-1} D_x f(x,\lambda) P$, where $P^{-1}$ is the exact inverse of the matrix $P$, and hence $\sigma(D_x f(x,\lambda))=\sigma(\Lambda)$. Since the vectors $\bar \xi_1,\dots,\bar \xi_n$ are only approximate eigenvectors, we call this process a pseudo-diagonalization. The idea now is to apply Gershgorin Circle Theorem to the matrix $\Lambda$ using the control $D_x f(x,\lambda) \subset {\bf M}$, that is using that 
\[
\Lambda \subset {\bf \Lambda} \bydef P^{-1} {\bf M} P,
\]
where ${\bf \Lambda}$ is an interval matrix which is almost diagonal. Denote $\Lambda= (\Lambda_{i,j})_{i,j=1}^n$. For each $i=1,\dots,n$, denote the Gershgorin disks by
\[
D_i=\{ z \in \C : |z-\Lambda_{i,i}| \le \sum_{j=1 \atop j \ne i}^n |\Lambda_{i,j}| \}.
\]
While the exact computation of $D_i$ is in general out of reach, we can resort to interval arithmetic and use that $\Lambda \subset {\bf \Lambda}$ to construct explicit sets $\hat D_i$ such that $D_i \subset \hat D_i$. Hence, if we can show that the sets $\hat D_i$ are mutually disjoint, we conclude from Gershgorin Circle Theorem (e.g. see \cite{MR832183}) that
\[
\sigma(D_x f(x,\lambda)) \subset \bigcup_{i=1}^n D_i \subset \bigcup_{i=1}^n \hat D_i
\]
where $\hat D_i$ contains exactly one eigenvalue of $D_x f(x,\lambda)$.

\subsection{Procedure for proving constructively the existence of cusp bifurcations} \label{sec:procedure_cusp}

In practice, the approach to prove the existence of a cusp bifurcation is as follows. Combining the two-parameter continuation and Poincaré index theory method presented in Section~\ref{sec:detection}, we detect the presence of a cusp. Then, using the approximate location of the cusp, we apply Newton's method on the cusp map $F$ defined in \eqref{eq:cusp_map} to obtain a refined numerical approximation $\bX$ for a solution of $F=0$. Afterwards, we apply Theorem~\ref{thm:NK} to prove that the cusp map defined in \eqref{eq:cusp_map} has a non-degenerate zero $X=(x,v,w,\lambda,h,s)$ in the vicinity of this numerical approximation. Using the approach of Section~\ref{sec:spectrum_enclosure}, we verify that exactly $n-1$ eigenvalues of $D_x f(x,\lambda)$ have nonzero real parts using the method of \ref{sec:spectrum_enclosure}. We conclude using Theorem~\ref{thm:cusp} that there is a cusp bifurcation at $(x,\lambda)$. Using the explicit control for the location $(x,\lambda)$, we may finally compute a rigorous enclosure for the normal form coefficient $c$ defined in \eqref{eq:nondegeneracy_coeff}.

\section{Applications} \label{sec:applications}

In this section, we apply the procedure presented in this paper (as laid out explicitly in Section~\ref{sec:procedure_cusp}) to detect and prove existence of cusp bifurcations in four model problems. In all examples, we choose the sup norm $\| \cdot \|_\infty$ to perform the computer-assisted proofs. Although other norms could be used in principle, the choice of the sup norm is motivated by two main factors. First, its unit ball is a hypercube aligned with the coordinate axes, making it simpler to describe and apply in various contexts. Second, when verifying the assumptions of Theorem~\ref{thm:NK}, calculating the bound \( Z \) in \eqref{eq:boundsY_Z} involves computing a matrix norm. This is straightforward with the sup norm but becomes significantly more computationally intensive with other norms, such as the Euclidean norm, which requires finding the largest singular value.

\subsection{Bazykin model} \label{sec:bazykin}

Consider the two-dimensional model
\begin{equation} \label{eq:bazykin_eq}
\dot x = f(x,\lambda) = \begin{pmatrix}
\displaystyle
x_1 - \frac{x_1 x_2}{1+\lambda_1 x_1} - 0.01 x_1^2 
\vspace{.1cm}
\\
\displaystyle
- x_2 +  \frac{x_1 x_2}{1+\lambda_1 x_1} - \lambda_2 x_2^2
\end{pmatrix},
\end{equation}
which was consider in \cite{MR801544,MR2071006} to model the dynamics of a predator-prey ecosystem, where the variable $x_1$ indicates the (scaled) prey population and the variable $x_2$ indicates the (scaled) predator population. The nonnegative parameters $\lambda_1$ and $\lambda_2$ describe the behaviour of isolated populations and their interaction. The recent approach proposed in \cite{OT_cusp} introduces a constrained optimization problem whose optima correspond to cusp bifurcations, and the optima are computed with a Lagrange multipliers method. 

We first computed a triangulation of the portion of the manifold of equilibria for \eqref{eq:bazykin_eq} contained inside the region $\{(x_1,x_2)\in[0,35]\times[0,35], (\lambda_1,\lambda_2)\in[0,4]\times[0,4]\}$, starting from the explicitly computed equilibrium $(x_1,x_2,\lambda_1,\lambda_2)=\left[10,10,\frac{91}{90},-\frac{1}{100}\right]$. The computation resulted in a triangulation made of approximately 2700 nodes and 5300 simplices. See Figure \ref{fig:example1}. The detection procedure flagged 2 triangles as \verb"CUSP" and all the rest as \verb"NO CUSP"; notably, no triangles were flagged as \verb"UNDETERMINED".

Finally, the centers of the two \verb"CUSP" triangles were handled by the computer-assisted proof presented in Section \ref{sec:CAPs}. This leads to the following two numerical approximations for the location of cusps, namely $\left(\bar x^{(1)},\bar \lambda^{(1)}\right)$ and $\left(\bar x^{(2)},\bar \lambda^{(2)}\right)$ given by
\begin{align*}
\bar x^{(1)} &= 
\begin{pmatrix} 
25.823060900508402 \\
2.442085531687371
\end{pmatrix}, 
\quad 
\bar \lambda^{(1)} = \begin{pmatrix} 0.088767308061008 \\ 2.802361210268358 \end{pmatrix}
\\
\bar x^{(2)} &= \begin{pmatrix} 
  32.593605766158269 \\
  20.474303357201517
\end{pmatrix}, \quad 
\bar \lambda^{(2)} 
= \begin{pmatrix} 0.901232691938992 \\ 0.003568419361272\end{pmatrix}.
\end{align*}
Then, we fixed $r_* = 10^{-12}$ and demonstrated that the exact cusps bifurcations occurred at $\left(\tilde x^{(1)},\tilde \lambda^{(1)}\right)$ and $\left(\tilde x^{(2)},\tilde \lambda^{(2)}\right)$, where $\|\tilde x^{(i)}-\bar x^{(i)}\|_\infty, \|\tilde \lambda^{(i)}-\bar \lambda^{(i)}\|_\infty \le r^{(i)}$, where $r^{(1)} = 8 \times 10^{-14}$ and $r^{(2)} = 4 \times 10^{-14}$. As a comparison, the values obtained in \cite{OT_cusp} are
\begin{equation} \label{eq:numerics_OT_cusp}
\lambda^{(1)} = (0.0887673081, 2.\underline{7983226350}) \quad \text{and} \quad \lambda^{(2)} = (0.9012326919, 0.00356\underline{31988})
\end{equation}
and hence our method shows that the underlined digits from the numerical results of \cite{OT_cusp} presented in \eqref{eq:numerics_OT_cusp} are inaccurate. 

Using the rigorous control over the location of the exact solutions of the cusp map, we proved that the coefficients of the normal form $\dot y = c y^3 + O(y^4)$ on the center manifold at each cusp satisfy $c^{(i)} \in \bar c^{(i)} + [-1,1]r_c^{(i)}$, where 
\begin{align*}
\bar c^{(1)} &= -1.948784809507697 \times 10^{-4}, \quad r_c^{(1)} = 1.1 \times 10^{-14} \\
\bar c^{(2)} &= 7.590018095668060 \times 10^{-5}, \quad r_c^{(2)} = 2.1 \times 10^{-15}.
\end{align*}
Hence, $c^{(1)}<0$ and the dynamics on the center manifold of the first cusp $\left(\tilde x^{(1)},\tilde \lambda^{(1)}\right)$ is stable. Moreover, $c^{(2)}>0$ and the dynamics on the center manifold of the second cusp $\left(\tilde x^{(2)},\tilde \lambda^{(2)})\right)$ is unstable.

\begin{figure}[h]
\centering
\includegraphics[width=.8\textwidth]{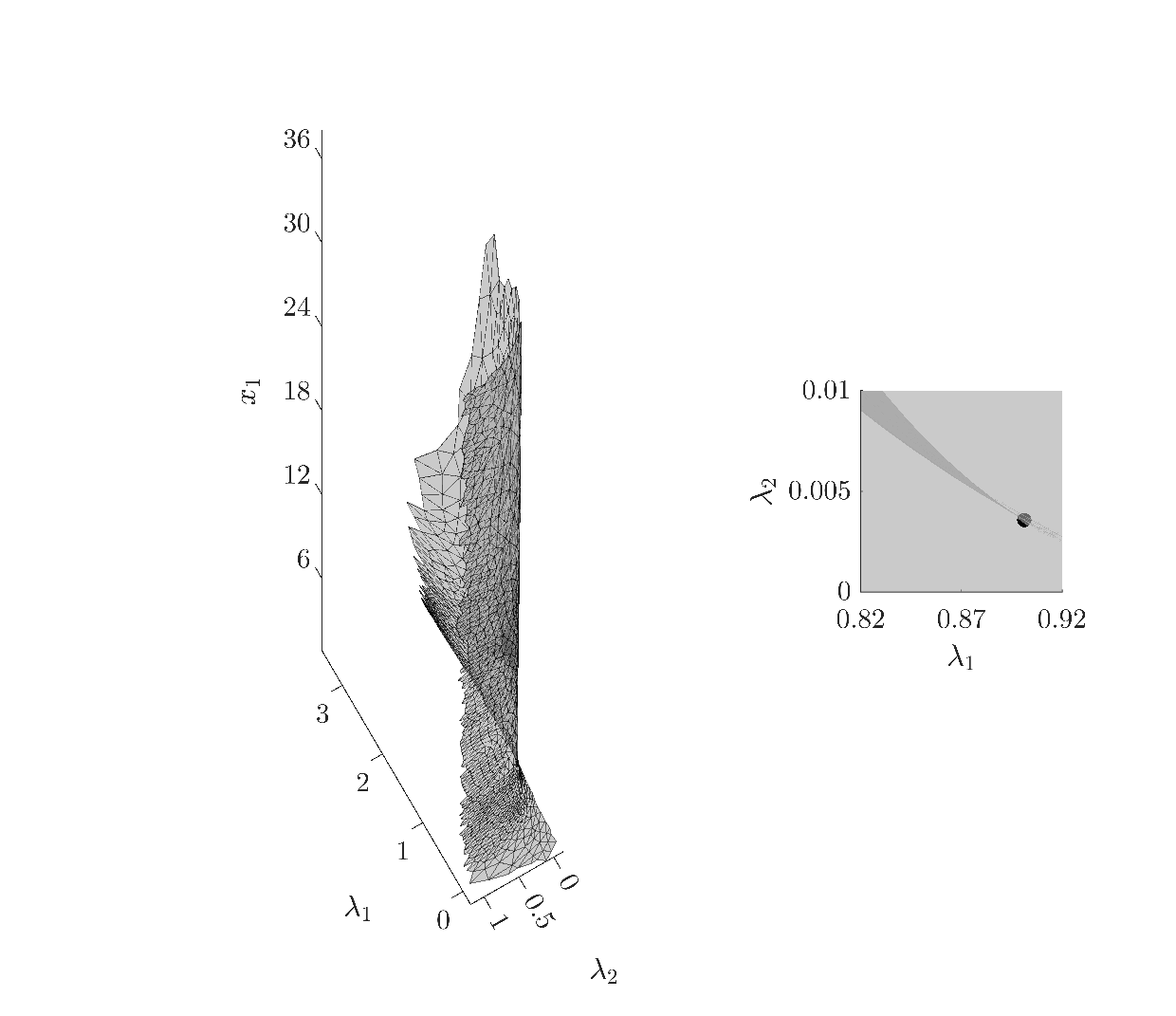}
\caption{On the left the projection of the triangulation for the manifold of equilibria of \eqref{eq:bazykin_eq} on the $(\lambda_1,\lambda_2,x_1)$-space; on the right a close up of the projection on the $(\lambda_1,\lambda_2)$-plane near the second cusp point (black circular mark). The darkest shade of grey identifies the region where three equilibria exist for the same value of the parameter.}\label{fig:example1}
\end{figure}

\subsection{Predator-prey systems with non-monotonic response function} \label{sec:predator-prey}

In \cite{MR1951954}, the following model 
\begin{equation} \label{eq:Predator-prey_eq}
\dot x =
f(x,\lambda) \bydef 
\begin{pmatrix}
\displaystyle
x_1 \left( 1 - \kappa x_1 - \frac{x_2}{\lambda_1 x_1^2 + \lambda_2 x_1 + 1} \right)
\\
\displaystyle
x_2 \left( -\delta - \mu x_2 + \frac{x_1}{\lambda_1 x_1^2 + \lambda_2 x_1 + 1} \right)
\end{pmatrix}
\end{equation}
was proposed as a predator-prey system with a nonmonotonic functional response $y \mapsto \frac{my}{ay^2+by+1}$, where it is assumed that the rate of conversion of captured prey is proportional to this function. The variables $x_1$ and $x_2$ are the prey and predator densities, respectively, while the parameter $\delta$ measures the mortality rate of the predator, $\kappa$ and $\mu$ measure the intraspecific competition. The predation factor is measured by parameters $\lambda_1$ and $\lambda_2$. As in \cite{HARJANTO2016188,OT_cusp,MR4043243}, we fix the parameters $\delta= 1.1$, $\kappa=0.01$, $\mu= 0.1$, and leave the parameters $\lambda_1$ and $\lambda_2$ free. 

We computed a triangulation of the portion of the manifold of equilibria for \eqref{eq:Predator-prey_eq} contained inside the region $\{(x_1,x_2)\in[0,20]\times[0,20], (\lambda_1,\lambda_2)\in[-3,3]\times[-3,3]\}$, starting from the explicitly computed equilibrium $(x_1,x_2,\lambda_1,\lambda_2)=\left[\frac{1200}{1001},\frac{989}{1001},0,0\right]$. The computation resulted in a triangulation made of approximately 1300 nodes and 2500 simplices. See Figure \ref{fig:example2}. The detection procedure flagged 3 triangles as \verb"CUSP" and 8 triangles as \verb"UNDETERMINED" on step 6 of the procedure. We performed the proof on all these triangles and only one of them turned out to contain a cusp point. Further inspection revealed that the triangles that yielded a \verb"CUSP" or \verb"UNDETERMINED" flag were very large, a fact that happens on portions of the manifold that are locally very flat (we recall that the size of triangles is inversely proportional to local curvature). This is a warning that tolerances for the triangulation procedure should be set so to limit the size of triangles. After tightening the tolerances to reduce triangle sizes and reapplying the method, only one triangle was flagged as \texttt{CUSP}, and none were flagged as \texttt{UNDETERMINED}.

For the actual cusp point, we fixed $r_* = 5 \times 10^{-12}$ and applied the approach Section \ref{sec:CAPs} to \eqref{eq:Predator-prey_eq}, obtaining the following numerical approximation for the cusp 
\[
\bar x = \begin{pmatrix} 
  13.196245980692725\\
   6.533219395017792
\end{pmatrix}, \quad 
\bar \lambda
= \begin{pmatrix} -0.004206696164016 \\ 0.550079382002905 \end{pmatrix}.
\]
and proved that there exists a cusp bifurcation at $(\tx,\tilde \lambda)$ where $\|\tx-\bar x\|_\infty,\|\tilde \lambda -\bar \lambda \|_\infty < r = 1.42 \times 10^{-12}$. As a comparison, the values obtained in \cite{OT_cusp} are
\begin{equation} \label{eq:numerics_OT_cusp2}
\lambda = (-0.00420\underline{0565504},0.55007938\underline{196})
\end{equation}
and hence our method shows that the underlined digits from the numerical results of \cite{OT_cusp} presented in \eqref{eq:numerics_OT_cusp2} are inaccurate. 

Using the rigorous control over the location of the exact solution of the cusp map, we proved that the coefficient of the normal form $\dot y = c y^3 + O(y^4)$ on the center manifold at each cusp satisfy $c \in \bar c + [-1,1]r_c$, where 
\[
\bar c = -0.005350994279567, \quad r_c = 2.6 \times 10^{-11}.
\]
Hence, $c<0$ and the dynamics on the center manifold of the cusp $(\tilde x,\tilde \lambda)$ is stable. 

\begin{figure}[h]
\centering
\includegraphics[width=.9\textwidth]{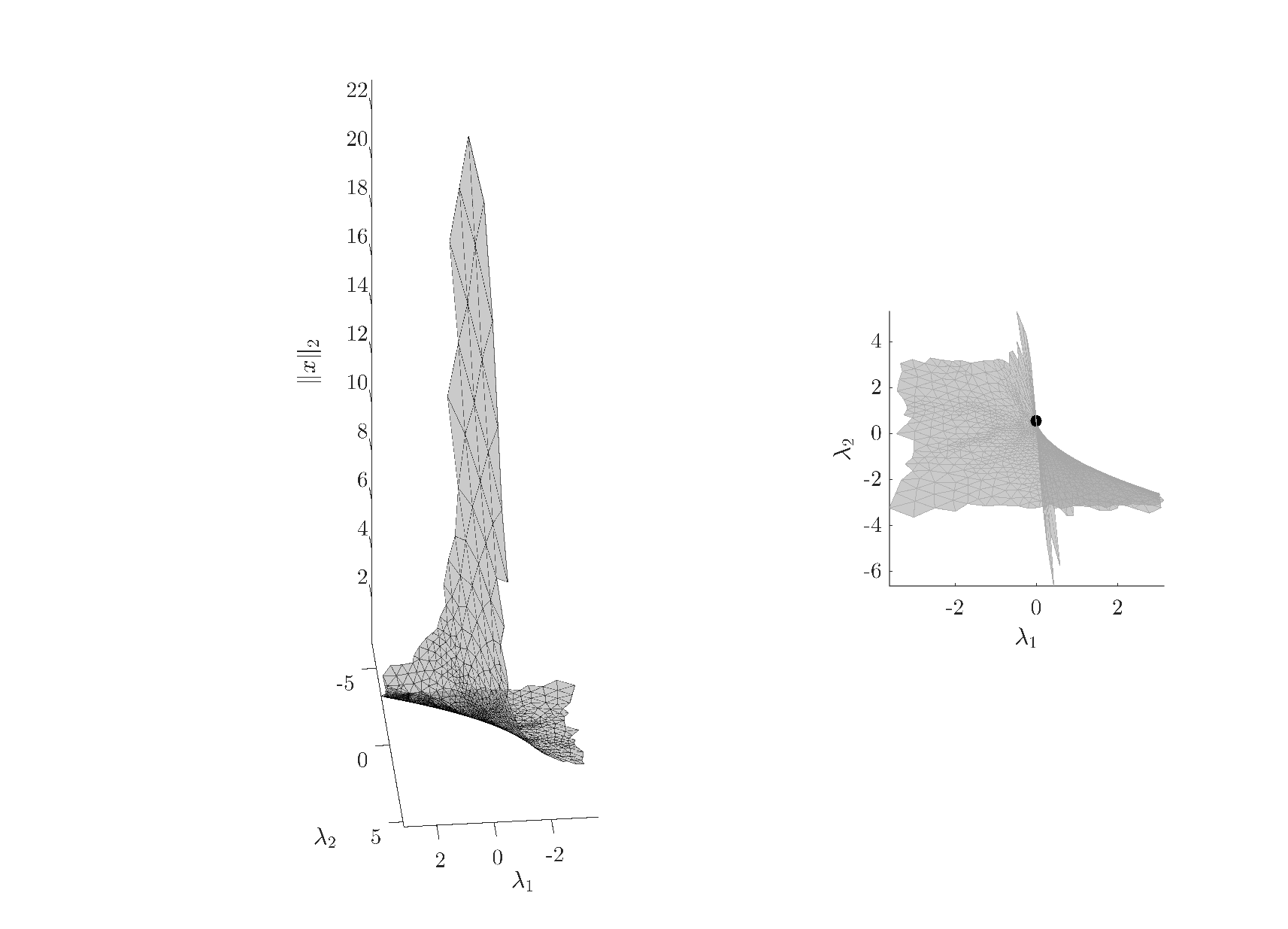}
\caption{On the left the projection of the triangulation of the manifold of equilibria of \eqref{eq:Predator-prey_eq} on the $(\lambda_1,\lambda_2,\Vert x\Vert_2)$-space; on the right the projection on the $(\lambda_1,\lambda_2)$-plane; a black circular mark signals the location of the cusp point.}\label{fig:example2}
\end{figure}

\subsection{Bykov model} \label{sec:bykov}

The following chemical model, introduced in \cite{Bykov}, describes carbon monoxide (CO) catalytic reaction (oxidation) on platinum
\begin{equation} \label{eq:bykov_eq}
\dot x =
f(x,\lambda) \bydef 
\begin{pmatrix}
5 (1-x_1-x_2-x_3)^2  - 2 x_1^2 - 10 x_1 x_2
\\
\lambda_1 (1-x_1-x_2-x_3) - 0.1 x_2 - 10 x_1 x_2
\\
0.0675 (1-x_1-x_2-x_3 - \lambda_2 x_3)
\end{pmatrix}.
\end{equation}
This system is often called the Bykov-Yablonksi-Kim model and has been studied numerically in \cite{MR1803181,kuznetsov_tutorial}. In particular, a cusp bifurcation has been numerically observed in \cite{kuznetsov_tutorial}. 

We investigated the region $\{\min([x_1,x_2,x_3,\lambda_1,\lambda_2])\ge0, \max([x_1,x_2,x_3,\lambda_1,\lambda_2])\le1.6 \}$. Starting from an equilibrium explicitly obtained by setting $\lambda_1=\lambda_2=1$ and solving with respect to $x_1,x_2,x_3$, we obtained a triangulation made of approximately 9300 nodes and 18500 simplices. See Figure \ref{fig:example3}. Only one triangle was flagged as \verb"CUSP", while all the rest were flagged as \verb"NO CUSP". 

The procedure in Section \ref{sec:CAPs} was then used to find the following numerical approximation of the cusp:
\[
\bar x = 
\begin{pmatrix} 
   0.035940636548892 \\
   0.352005430946603 \\
   0.451370111738407
\end{pmatrix}
\quad \text{and} \quad
\bar \lambda =
\begin{pmatrix}
1.006408329678319 \\
0.355991273208678
\end{pmatrix}.
\]
We fixed $r_* = 10^{-10}$ and proved that there exists a cusp bifurcation at $(\tx,\tilde \lambda)$ where $\|\tx-\bar x\|_\infty,\|\tilde \lambda -\bar \lambda \|_\infty < r = 3.7 \times 10^{-13}$. Using the rigorous control over the location of the exact solution of the cusp map, we proved that the coefficient of the normal form $\dot y = c y^3 + O(y^4)$ on the center manifold at each cusp satisfy $c \in \bar c + [-1,1]r_c$, where 
\[
\bar c = 0.362788656889452, \quad r_c = 1.7 \times 10^{-11}.
\]
Hence, $c>0$ and the dynamics on the center manifold of the cusp $(\tilde x,\tilde \lambda)$ is unstable. 

\begin{figure}[h]
\centering
\includegraphics[width=1\textwidth]{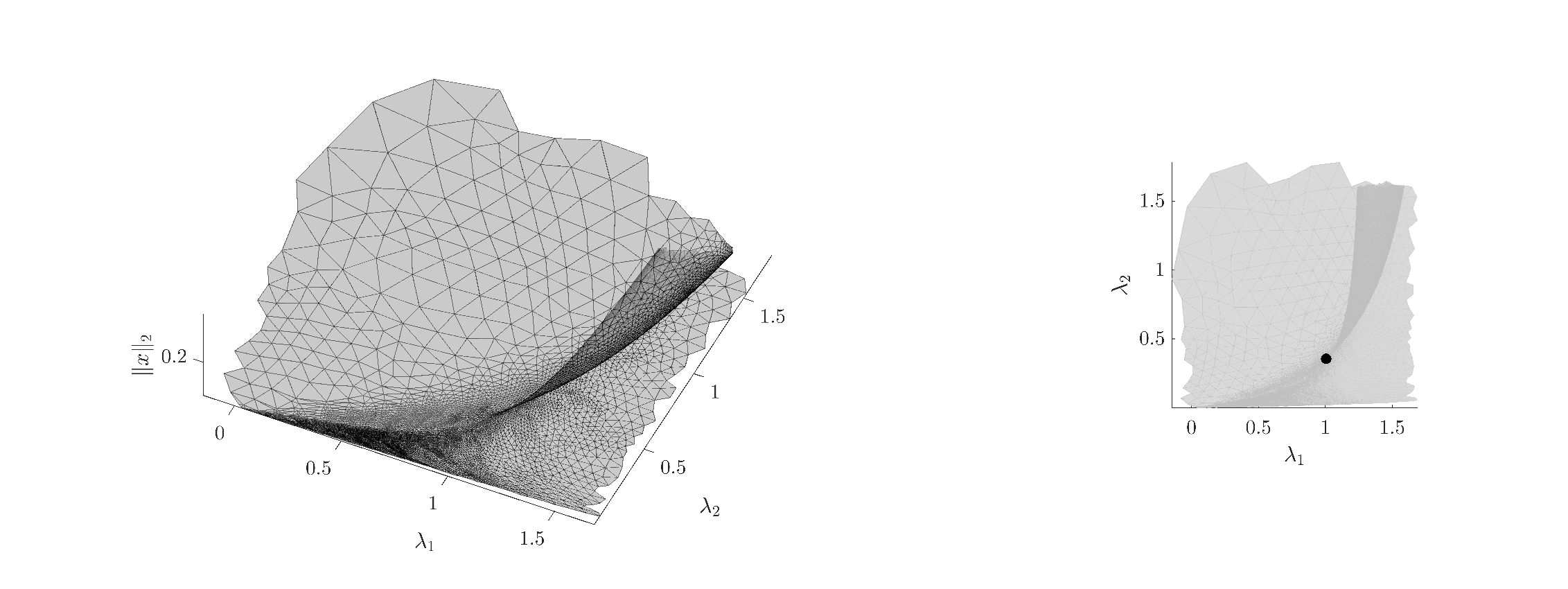}
\caption{On the left the projection of the triangulation of the manifold of equilibria of \eqref{eq:bykov_eq} on the $(\lambda_1,\lambda_2,\Vert x\Vert_2)$-space; on the right the projection on the $(\lambda_1,\lambda_2)$-plane; a black circular mark signals the location of the cusp point.}\label{fig:example3}
\end{figure}

\subsection{Metastatic cell transition as an ODE model} \label{sec:metastatic}

Consider the model introduced in \cite{pnas_cancer} 
\begin{equation} \label{eq:shub_eq}
\dot x =
f(x,\lambda) \bydef 
\begin{pmatrix}
\displaystyle
\frac{1}{1+x_3} - \lambda_1 x_1
\vspace{.15cm}
\\
\displaystyle
1000 \frac{x_1^5}{32+x_1^5} - x_2 - 200x_2 x_3
\vspace{.15cm}
\\
\displaystyle
0.02 +  19.98 \frac{\lambda_2^3}{\lambda_2^3+x_3^3} - x_3 - 200 x_2 x_3
\end{pmatrix}
\end{equation}
where $x_1$, $x_2$ and $x_3$ represent the proteins {\tt RKIP}, {\tt let-7} and {\tt BACH1} which interact in the cell and are relevant for determining breast cancer metastasis. The parameters $\lambda_1$ and $\lambda_2$ describe the instability of {\tt RKIP} and insensitivity of {\tt BACH1} to self-regulation, respectively. The existence of cusp bifurcations was numerically investigated by Delamonica et al. in \cite{shub}. 

We computed a triangulation of the portion of manifold of equilibria for \eqref{eq:shub_eq} inside the region $\{(x_1,x_2,x_3)\in [0,3]\times[0,3]\times[0,3], (\lambda_1,\lambda_2)\in [.05, 1.5]\times [.05, 1.5]\}$ starting from an equilibrium explicitly computed by imposing $\lambda_1=\lambda_2=1$. The triangulation had approximately 2600 nodes and 4900 simplices. See Figure \ref{fig:example4}. All simplices except one were flagged as \verb"NO CUSP". One was flagged as \verb"CUSP" and led to the following numerical approximation:
\begin{align*}
\bar x &= 
\begin{pmatrix} 
   0.932145719912463\\
   2.218402699305749\\
   0.043501045476190
\end{pmatrix}
\quad \text{and} \quad
\bar \lambda =
\begin{pmatrix} 
   1.028071456932951\\
   0.134352688793418
\end{pmatrix}.
\end{align*}

We fixed $r_* = 10^{-12}$ and demonstrated that the exact cusp bifurcation occurs at $(\tilde x,\tilde \lambda)$, where $\|\tilde x-\bar x\|_\infty, \|\tilde \lambda-\bar \lambda\|_\infty \le 3.1 \times 10^{-13}$. Using the rigorous control over the location of the exact solution of the cusp map, we proved that the coefficient of the normal form $\dot y = c y^3 + O(y^4)$ on the center manifold satisfies $c \in \bar c + [-1,1]r_c$, where $\bar c= -0.062132151368075$ and $r_c= 3.8 \times 10^{-11}$. Hence, $c<0$ and the dynamics on the center manifold of the cusp $(\tilde x,\tilde \lambda)$ is stable.

\begin{figure}[h]
\centering
\includegraphics[width=1\textwidth]{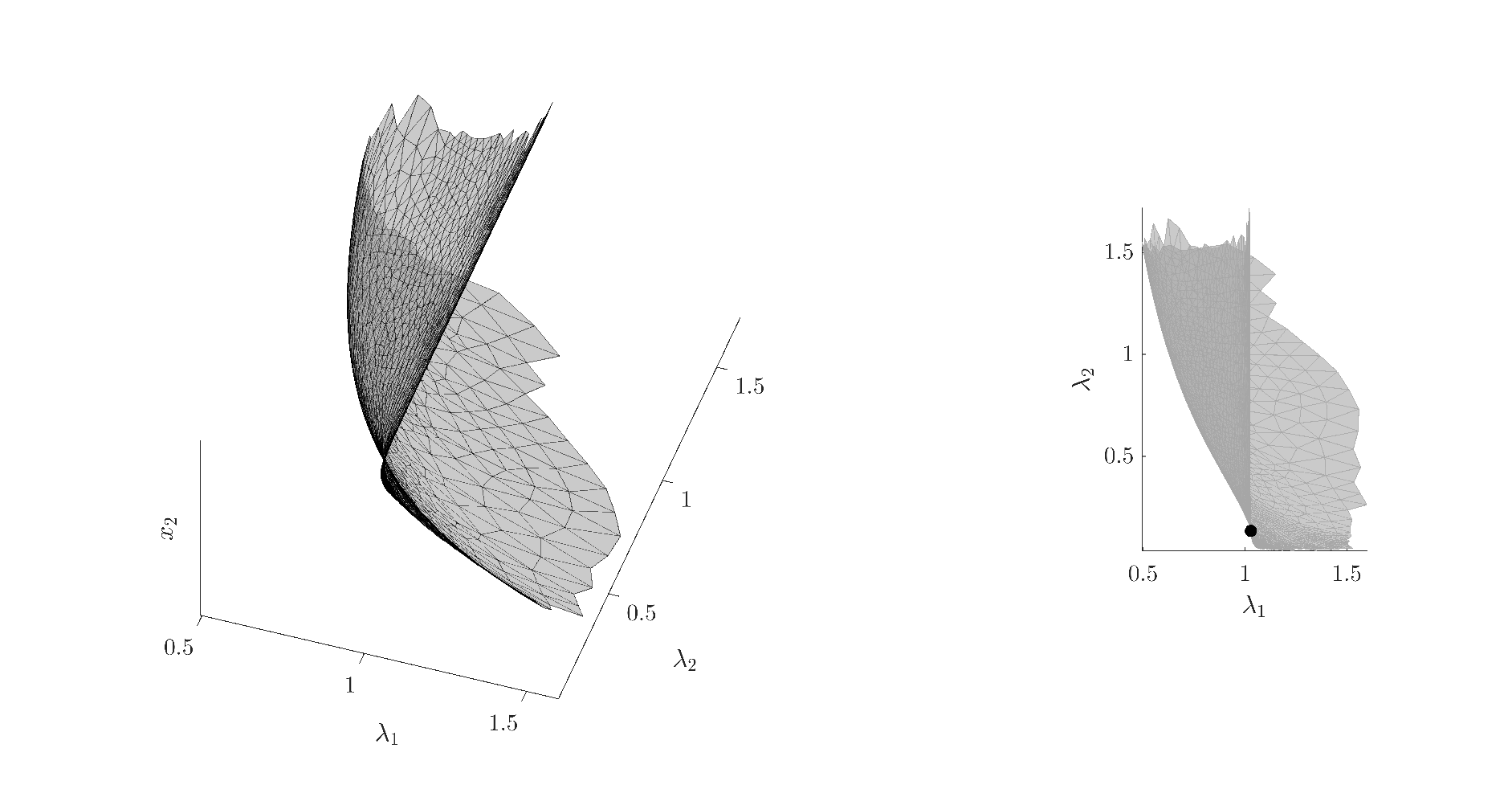}
\caption{On the left the projection of the triangulation of the manifold of equilibria of \eqref{eq:shub_eq} on the $(\lambda_1,\lambda_2,x_2)$-space; on the right the projection on the $(\lambda_1,\lambda_2)$-plane; a black circular mark signals the location of the cusp point.}\label{fig:example4}
\end{figure}


\bibliographystyle{plain}
\bibliography{papers}

\end{document}